\renewcommand{\@fnsymbol}[1]{\@alph{#1}}
\newcommand{\bbr}{\mathbb{R}}
\newcommand{\bbn}{\mathbb{N}}
\newcommand{\bbx}{\mathbb{X}}
\newcommand{\bby}{\mathbb{Y}}
\newcommand{\fn}[1]{\footnote{#1}}
\newcommand{\pcal}{\mathcal{P}}
\newcommand{\bcal}{\mathcal{B}}
\newcommand{\ncal}{\mathcal{N}}
\newcommand{\mcal}{\mathcal{M}}
\newtheorem{lemma}{Lemma}[section]
\newtheorem{proposition}[lemma]{Proposition}
\newtheorem{theorem}[lemma]{Theorem}
\newtheorem{corollary}[lemma]{Corollary}
\newtheorem{definition}[lemma]{Definition}
\newtheorem{example1}[lemma]{Example}
\newtheorem{ex1}[lemma]{Example}
\newtheorem{rem1}[lemma]{Remark}
\newtheorem{assumption}[lemma]{Assumption}
\newtheorem{alg1}[lemma]{Algorithm}
\newtheorem{me1}[lemma]{Mechanism}
\newenvironment{remark}{\begin{rem1}\rm}{\end{rem1}}
\newenvironment{example}{\begin{example1}\rm}{\end{example1}}
\numberwithin{equation}{section}
\numberwithin{figure}{section}
\newcommand{\cl}{{\rm cl\,}}
\newcommand{\graph}{{\rm graph\,}}
\DeclareMathOperator*{\FIX}{FIX}
\DeclareMathOperator*{\argmin}{arg\,min}
\DeclareMathOperator*{\argmax}{arg\,max}
\begin{document}

\title{Continuity and sensitivity analysis of parameterized Nash games}
\author{Zachary Feinstein\fn{Stevens Institute of Technology, School of Business, Hoboken, NJ 07030, USA, {\tt zfeinste@stevens.edu}.}\\[0.7ex] \textit{Stevens Institute of Technology}.}
\date{\today}
\maketitle

\begin{abstract}
In this paper we consider continuity of the set of Nash equilibria and approximate Nash equilibria for parameterized games.  
For parameterized games with unique Nash equilibria, the continuity of this equilibrium mapping is well-known.  However, when the equilibria need not be unique, there may exist discontinuities in the equilibrium mapping.  The focus of this work is to summarize continuity properties for parameterized Nash equilibria and prove continuity via the approximate Nash game with uniformly continuous objective functions over potentially non-compact strategy spaces.
\end{abstract}\vspace{0.2cm}
\textbf{Key words:} game theory; essential equilibria; approximate Nash equilibria; data dependence

\section{Introduction}\label{Sec:Intro}
Mathematical and economic games are, traditionally, defined with fixed parameters.  However, in reality these parameters are often calibrated from data, e.g., for risk aversion for the players of the game.  This naturally introduces estimation errors in the best response function and, thus, also the Nash equilibria.  In this work, we consider the convergence of the set of Nash equilibria as parameters limit to the ``true'' values.

The results determined herein are directly motivated by mean field games.  Such problems present a setting with a finite number of players, but with \emph{non-compact} strategy spaces.  As far as the author is aware, no results on essential equilibria and robustness of Nash equilibria have previously been general enough to consider such games.
The results of this work would, equally, be applicable for a differential game as the limit of discrete time dynamic games as the time step tends to 0.  In this work we are focused on the set of Nash equilibria themselves rather than the value of the game as is considered in, e.g.,~\cite{FRZ2020dpp}.

By the nature of games, this problem is intimately related to the question of continuity of fixed points.  That question has been studied in the literature before.  For single-valued mappings we refer to, e.g., \cite{browder60,gauld88,Kwiecinski92,ladeira92,MR94}, for set-valued or multivalued mappings we refer to, e.g., \cite{markin73,markin76,lim85,kirr97,RPS03,EP05,PRS14}.  Some of these results are summarized in Appendix~\ref{Sec:FixedPt}.

This work studies problems in the vein of essential equilibria, which are those Nash equilibria which are robust to perturbations.  This concept was introduced in~\cite{wu1962essential} and has been extended in, e.g.,~\cite{yu1999essential,cn2010essential}.  In this work, we similarly consider the robustness of the set of equilibria to perturbations in some parameter of the game.  Herein we extend these notions of robustness for non-compact spaces, which is vital for studying, e.g., mean field games.

The primary innovation of this work is summarized in Figure~\ref{Fig:NashCont}.  The notation and details for this summary are provided in Section~\ref{Sec:Nash}.  Briefly, consider a Nash game conditional on some parameter, e.g., the risk aversion for every player.  Let $h(\cdot)$ denote the set of parameterized, pure or mixed, Nash equilibria (in a, potentially, \emph{non-compact} metric space) and 
$h^{\vec\epsilon}(\cdot)$ denote
the set of approximate Nash equilibria for the same game.  Figure~\ref{Fig:NashCont} provides relations between these equilibria as the parameters converge.  Notably, and as highlighted throughout this work, the full set of Nash equilibria is captured by convergence of the approximate equilibria rather than the set of equilibria themselves.  This result is provided in Theorem~\ref{Thm:Approx-Limit}.  This has implications for data dependence since consideration of the Nash equilibria themselves can result in an under-accounting of the true equilibria. This is highlighted for a mean field game in~\cite{nutz2018casestudy}.
\begin{figure}[t]
\centering
$\boxed{\liminf_{n \to \infty} h(x_n) \subseteq \limsup_{n \to \infty} h(x_n) \subseteq h(x) = \lim_{\vec\epsilon \searrow 0} \limsup_{n \to \infty} h^{\vec\epsilon}(x_n) = \lim_{\vec\epsilon \searrow 0} \liminf_{n \to \infty} h^{\vec\epsilon}(x_n)}$
\caption{Ordering for the convergence of fixed points and approximate fixed points with $x_n \to x$.}
\label{Fig:NashCont}
\end{figure}
Throughout this work we consider the setting in which there are a multiplicity of equilibria.  In the case of unique parameterized equilibria over compact spaces, continuity of the Nash equilibria is a fairly standard result.

The organization of this paper is as follows.  In Section~\ref{Sec:Motivation}, we present a simple two player game that conceptually demonstrates the major results of this work.  Properly motivated, we present the main results of this work in Section~\ref{Sec:Nash}.  For completeness, continuity arguments for the set of Nash equilibria are presented in Section~\ref{Sec:Nash-Games}.  The main results of this work are then presented in Section~\ref{Sec:Nash-Approximate}; it is in this section that we demonstrate how to utilize the approximate Nash equilibria to find the appropriate continuity and convergence notions.  Background material on set-valued continuity are presented in the Appendix~\ref{Sec:Background}. Theoretical results on the continuity of the value and best response functions for a game are summarized in Appendix~\ref{Sec:BRF}.  Considerations on the continuity of fixed point problems are summarized in Appendix~\ref{Sec:FixedPt}.

\section{Motivating example}\label{Sec:Motivation}
Before considering the main results of this work in Section~\ref{Sec:Nash}, we wish to provide a simple motivating example for the continuity and convergence arguments undertaken in this work.  Specifically, to consider a simple setup, we introduce a parameterized two player game in a compact space.  
In this example we use some terminology of Nash games, for descriptions of these terms we refer the reader to Section~\ref{Sec:Nash-Setting} below.

Consider a simple two player game constructed to provide many of the fundamental insights seen in Section~\ref{Sec:Nash-Games}.  For this problem we consider a compact parameter space $\bbx := [0,2]$ over which we want to consider the continuity of our Nash equilibria.  The strategy space for both players are the compact spaces $\bby_1 = \bby_2 := [0,1]$ and both players are seeking to maximize strictly concave quadratic objective functions.  Let $H: [0,2] \times [0,1]^2 \to [0,1]^2$ denote the best response function for this game; specifically we define this game by:
\begin{align*}
H_1(x,y) &:= \argmax_{y_1^* \in [0,1]} \left\{-y_1^*(y_1^* - 2xy_2)\right\} = \min(xy_2,1)\\
H_2(x,y) &:= \argmin_{y_2^* \in [0,1]} \left\{-y_2^*(y_2^* - 2y_1)\right\} = y_1.
\end{align*}
By Brouwer's fixed point theorem, there exists some Nash equilibrium for every $x \in [0,2]$, though it need not be unique.  In fact, for any $x \in [0,2]$, we define the set of Nash equilibria (i.e., the fixed points) by the mapping:
\begin{align*}
h(x) &:= \FIX_{y \in [0,1]^2} H(x,y) = \begin{cases} \{(0,0)\} &\text{if } x < 1 \\ \{y \in [0,1]^2 \; | \; y_1 = y_2\} &\text{if } x = 1 \\ \{(0,0) \; , \; (1,1)\} &\text{if } x > 1. \end{cases}
\end{align*}
By the closed graph theorem (see, e.g., Theorem~\ref{Thm:ClosedGraphThm}) this fixed point mapping is upper continuous.  However, it is \emph{not} lower continuous at $x = 1$.  This can be demonstrated by setting $x = 1$ and $y \in (0,1)^2$ such that $y_1 = y_2$; for any $\{x_n\}_{n \in \bbn} \subseteq [0,2] \backslash \{1\} \to 1$ and any $\{y_n\}_{n \in \bbn} \subseteq [0,1]^2 \to y$ there exists $N \geq 0$ such that $y_n \not\in h(x_n)$ for every $n \geq N$.

Intuitively, this provides the main interpretations of upper and lower continuity, i.e.,
\begin{itemize}
\item no point in an upper continuous set-valued mapping ``disappears'' except as the limit of a net and
\item no point in a lower continuous set-valued mapping ``appears'' except as the limit of other values.
\end{itemize}
Notably, these continuity arguments coincide for single-valued functions.  For more details we refer to Appendix~\ref{Sec:Background} and the references cited therein.
In fact, as discussed in Appendix~\ref{Sec:FixedPt}, we often find that fixed point mappings are upper, but not lower, continuous.  Thus, as presented in Proposition~\ref{Prop:ClosedLimit}, there may be fixed points that are not limits of fixed points as seen in this simple example.

Before presenting the main theoretical results, we wish to give a quick hint as to the main results of this work.  If instead of considering the original game and associated fixed point problem $y = H(x,y)$, we introduce an approximating game for $\epsilon \in (0,\frac{1}{4})$:
\begin{align*}
H_1^{\epsilon}(x,y) &:= \left\{y_1^* \in [0,1] \; | \; -y_1^*(y_1^*-2xy_2) > v_1(x,y) - \epsilon\right\}\\
    &= \left(xy_2 - \sqrt{\max\{0,xy_2-1\}^2 + \epsilon} \; , \; xy_2 + \sqrt{\epsilon}\right) \cap [0,1]\\
H_2^{\epsilon}(x,y) &:= \left\{y_2^* \in [0,1] \; | \; -y_2^*(y_2^*-2y_1) > v_2(x,y) - \epsilon\right\}\\
    &= \left(y_1 - \sqrt{\epsilon} \; , \; y_1 + \sqrt{\epsilon}\right) \cap [0,1]
\end{align*}
where $v_1(x,y) = -\min\{xy_2,1\}^2 + 2xy_2\min\{xy_2,1\}$ and $v_2(x,y) = y_1^2$ are the optimal values for the payoff of player 1 and 2 respectively given the parameter $x$ and the strategy of the other player.
With this fixed approximation error $\epsilon$, and with the convention that $1/0 = \infty$, the set of approximating Nash equilibria $h^{\epsilon}(x) := \FIX_{y \in [0,1]^2} H^{\epsilon}(x,y)$ are provided by:
\begin{align*}
h^{\epsilon}(x) &= \begin{cases}
    A(x) &\text{if } x \leq 1 \\
    A(x) \cup B_1(x) &\text{if } x \in (1 , \frac{1+\epsilon}{(1-\sqrt{\epsilon})^2}) \\
    A(x) \cup B_2(x) \cup C(x) &\text{if } x \geq \frac{1 + \epsilon}{(1-\sqrt{\epsilon})^2},
    \end{cases}\\
A(x) &= \left\{y \in [0,1]^2 \; | \; y_1 \in (\max\{1,x\}y_2 - \sqrt{\epsilon} , \min\{1,x\}y_2 + \sqrt{\epsilon}) , y_2 \in [0,\frac{2}{|1-x|}\sqrt{\epsilon})\right\},\\
B_1(x) &= \left\{y \in [0,1]^2 \; | \; y_1 \in (xy_2-\sqrt{(xy_2-1)^2+\epsilon} , y_2 + \sqrt{\epsilon}) , y_2 \in (\frac{1}{x} , 1]\right\},\\
B_2(x) &= \left\{y \in [0,1]^2 \; \left| \; \begin{array}{l} y_1 \in (xy_2-\sqrt{(xy_2-1)^2+\epsilon} , y_2 + \sqrt{\epsilon}) ,\\ y_2 \in (\frac{1}{x} , \frac{(1-\sqrt{\epsilon})x + \sqrt{\epsilon} - \sqrt{[(1-\sqrt{\epsilon})x+\sqrt{\epsilon}]^2 - [2x-1]}}{2x-1}) \end{array}\right.\right\},\\
C(x) &= \left\{y \in [0,1]^2 \; \left| \; \begin{array}{l} y_1 \in (xy_2-\sqrt{(xy_2-1)^2+\epsilon} , y_2 + \sqrt{\epsilon}) ,\\ y_2 \in (\frac{(1-\sqrt{\epsilon})x + \sqrt{\epsilon} + \sqrt{[(1-\sqrt{\epsilon})x+\sqrt{\epsilon}]^2 - [2x-1]}}{2x-1} , 1] \end{array}\right.\right\}.
\end{align*}
We wish to note that $B_1(x) = \emptyset$ for $x = 1$, $B_1(x) = B_2(x) \cup C(x) \cup \{(\frac{1 + \sqrt{\epsilon} + \epsilon + \epsilon\sqrt{\epsilon}}{(1+\sqrt{\epsilon})^2} , \frac{1-\epsilon}{(1+\sqrt{\epsilon})^2})\}$ for $x = \frac{1+\epsilon}{(1-\sqrt{\epsilon})^2}$, and $B_2(x) = \emptyset$ for $x \geq \frac{1}{1-2\sqrt{\epsilon}}$.

By construction, $h(x) = \lim_{\epsilon \searrow 0} h^{\epsilon}(x)$ for any $x \in [0,2]$.  Additionally, and providing an application of Theorem~\ref{Thm:Approx-Limit}, $h(x) = \lim_{\epsilon \searrow 0} \liminf_{n \to \infty} h^{\epsilon}(x_n)$ for any sequence $\{x_n\}_{n \in \bbn} \subseteq [0,2] \to x$.  Thus we can consider sensitivity analysis on the approximating fixed points $h^{\epsilon}$ in order to study the set of fixed points $h$.  This is in contrast to how such objects are typically studied and motivates the consideration of approximations taken within this work more generally.

\section{Continuity of the set of Nash equilibria}\label{Sec:Nash}
This section provides all the main results of this work.  This is summarized in Figure~\ref{Fig:NashCont} for games where the set of parameterized Nash equilibria is given by $h(\cdot)$.
The notation used in the summary Figure~\ref{Fig:NashCont} is provided in Section~\ref{Sec:Nash-Setting} with the details provided in Sections~\ref{Sec:Nash-Games} and~\ref{Sec:Nash-Approximate}.

\subsection{Setting and notation}\label{Sec:Nash-Setting}
Consider a game whose set of players is indexed by $I$.  For simplicity, throughout this work we will consider a finite player game $|I| < \infty$ or one that can be modeled in such a way (e.g., a mean field game).  Let $\bby_i$, the strategy space (either of pure or mixed strategies) for player $i \in I$, be a metric space with metric $d_i$.  Define $\bby := \prod_{i \in I} \bby_i$ to be the product space with metric $d(y^1,y^2) := \sum_{i \in I} d_i(y^1_i,y^2_i)$.  Denote the power set of $\bby$ by $\pcal(\bby) := \{Y \subseteq \bby\}$ (respectively $\pcal(\bby_i)$ of $\bby_i$).
Throughout this work, we often want to fix the strategy of all players but $i$.  Notationally we define such a strategy by $y_{-i} \in \prod_{j \in I\backslash\{i\}} \bby_j =: \bby_{-i}$, and (with slight abuse of notation) $y = (y_i,y_{-i})$.
As we wish to consider a parameterized Nash game, we introduce the parameter space $\bbx$ to be a metric space.

Consider a parameterized Nash game in which player $i \in I$ has continuous utility function $f_i: \bbx \times \bby \to \bbr$ and continuous strategy space $F_i: \bbx \times \bby_{-i} \to \pcal_f(\bby_i) := \{Y_i \subseteq \bby_i \; | \; Y_i = \cl(Y_i), \; Y \neq \emptyset\}$.  
\begin{definition}\label{Defn:NashEquil}
Given a parameterized Nash game, a \textbf{\emph{Nash equilibrium}} $y^* \in \bby$ at $x \in \bbx$ if, for every $i \in I$, $y_i^* \in F_i(x,y_{-i}^*)$ and
\[f_i(x,y^*) \geq f_i(x,y_i,y_{-i}^*) \quad \forall y_i \in \bby: \; y_i \in F_i(x,y_{-i}^*).\]
\end{definition}

Oftentimes the Nash equilibria are defined in terms of the best response function rather than the definition given above.
\begin{definition}\label{Defn:BestRespFnc}
Given a parameterized Nash game, the \textbf{\emph{best response function}} for player $i \in I$ is defined as $H_i: \bbx \times \bby \to \pcal(\bby_i)$ with
\begin{equation}\label{Eq:H}
H_i(x,y) := \argmax_{y_i^* \in \bby_i} \left\{f_i(x,y_i^*,y_{-i}) \; | \; y_i^* \in F_i(x,y_{-i})\right\}.
\end{equation}
\end{definition}
Note that a best response function, $H_i$, may be empty for some choices of $(x,y) \in \bbx \times \bby$.
With this function we are able to determine the set of all Nash equilibria as a fixed point problem.
Consider the product function $H := \prod_{i \in I} H_i$ of the best response functions for our Nash game.
\begin{definition}\label{Defn:NashEquil2}
Given a parameterized Nash game and best response function $H := \prod_{i \in I} H_i$, the parameterized \textbf{\emph{set of Nash equilibria}} is the mapping $h: \bbx \to \pcal(\bby)$ such that
\begin{equation}\label{Eq:h}
h(x) := \FIX_{y \in \bby} H(x,y) := \{y \in \bby \; | \; y \in H(x,y)\}.
\end{equation}
\end{definition}
It is this mapping $h:\bbx \to \pcal(\bby)$ of Nash equilibria which we wish to investigate within this work.
Throughout the remainder of this work we will utilize the notation and definitions provided in this section.

\subsection{Equilibria of Nash games}\label{Sec:Nash-Games}
In this section we study the best response function $H$ and set of Nash equilibria $h$ to determine continuity and sensitivity arguments for the set of Nash equilibria.  In particular, as shown in the motivating example of Section~\ref{Sec:Motivation}, we find that the set of Nash equilibria are typically not continuous and therefore the results highlighted by that example hold more generally. These results are generally well-known and included for completeness.  To ease the readability of this work, we first review the assumptions from Section~\ref{Sec:Nash-Setting} which will hold throughout the rest of this work.

\begin{assumption}\label{Ass:Nash-1}
Consider a \emph{finite} player game with players indexed by $I$.  The strategy space of player $i \in I$ is given by the metric space $\bby_i$.  Additionally, the game is parameterized by the metric space $\bbx$.  Each player $i \in I$ has continuous utility function $f_i: \bbx \times \bby \to \bbr$ and continuous feasible strategy space $F_i: \bbx \times \bby_{-i} \to \pcal_f(\bby_i)$.
\end{assumption}

\begin{proposition}\label{Prop:NashClosed}
Consider the generalized Nash game described in Section~\ref{Sec:Nash-Setting}, $\graph H \subseteq \bbx \times \bby \times \bby$ and $\graph h \subseteq \bbx \times \bby$ are closed in their respective product topologies.
\end{proposition}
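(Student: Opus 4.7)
The plan is to prove the closedness of $\graph H$ directly from the definition by a standard limit-argument, and then deduce the closedness of $\graph h$ essentially as a corollary, using that $y \in h(x)$ is equivalent to $(x,y,y) \in \graph H$.

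For $\graph H$, I take a convergent sequence $(x_n, y_n, z_n) \to (x,y,z)$ with $z_n \in H(x_n,y_n)$, and must show $z \in H(x,y)$. Componentwise, this amounts to verifying two conditions for each $i \in I$: feasibility $z_i \in F_i(x, y_{-i})$, and optimality $f_i(x, z_i, y_{-i}) \geq f_i(x, y_i', y_{-i})$ for every $y_i' \in F_i(x, y_{-i})$. Feasibility follows because $F_i$ is continuous, hence has closed graph (upper semicontinuity suffices here, invoked via Theorem~\ref{Thm:ClosedGraphThm}), so $z_{n,i} \in F_i(x_n, y_{n,-i})$ passes to the limit $z_i \in F_i(x, y_{-i})$.

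For optimality, fix any competing strategy $y_i' \in F_i(x, y_{-i})$. Here I use the \emph{lower} semicontinuity of $F_i$ (the other half of its continuity) to select, via Theorem~\ref{Thm:ClosedGraphThm} or the characterization of lower semicontinuity, a sequence $y_{n,i}' \in F_i(x_n, y_{n,-i})$ with $y_{n,i}' \to y_i'$. Because $z_{n,i}$ is a best response at $(x_n, y_{n,-i})$, we have
\[
f_i(x_n, z_{n,i}, y_{n,-i}) \;\geq\; f_i(x_n, y_{n,i}', y_{n,-i}).
\]
Passing to the limit using the joint continuity of $f_i$ on $\bbx \times \bby$ gives $f_i(x, z_i, y_{-i}) \geq f_i(x, y_i', y_{-i})$. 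Since $y_i'$ was arbitrary in $F_i(x, y_{-i})$, $z_i \in H_i(x,y)$, and taking the product over $i \in I$ yields $z \in H(x,y)$, proving $\graph H$ is closed.

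For $\graph h$, take $(x_n, y_n) \to (x,y)$ with $y_n \in h(x_n)$, i.e., $y_n \in H(x_n, y_n)$. Then $(x_n, y_n, y_n) \in \graph H$, and the limit $(x, y, y)$ lies in $\graph H$ by the first part. Hence $y \in H(x,y)$, so $y \in h(x)$, giving closedness of $\graph h$.

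The main subtlety is making sure the \emph{continuity} assumption on each $F_i$ is invoked in both of its aspects: upper semicontinuity to pass feasibility through the limit, and lower semicontinuity to manufacture the approximating competing strategies $y_{n,i}'$. Without lower semicontinuity the comparison argument in the optimality step cannot be set up, since an arbitrary $y_i' \in F_i(x, y_{-i})$ need not be attainable as a limit of admissible perturbations along the sequence. The rest is a routine limit argument exploiting continuity of $f_i$.
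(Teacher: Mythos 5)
Your proposal is correct and follows essentially the same route as the paper: the paper obtains closedness of each $\graph H_i$ by citing Theorem~\ref{Thm:MaxGraph}, whose proof is exactly your two-step argument (feasibility via the closed graph of $F_i$ from upper continuity, optimality via lower continuity to approximate a competing strategy and then pass to the limit using continuity of $f_i$), then writes $\graph H = \bigcap_{i \in I}[\graph H_i \times \bby_{-i}]$ and concludes for $\graph h$ via Lemma~\ref{Lemma:FixedPtUC}\eqref{Lemma:FixedPtUC-1}, which is your final paragraph verbatim. You have simply inlined the cited results rather than invoking them, and since $\bbx$ and $\bby$ are metric spaces here, your use of sequences in place of the paper's nets is harmless.
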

\begin{proof}
First, we note by Theorem~\ref{Thm:MaxGraph} that $\graph H_i$ is closed for every player $i \in I$.  Second, we determine that
\[\graph H = \{(x,y,y^*) \in \bbx \times \bby \times \bby \; | \; y_i^* \in H_i(x,y) \; \forall i \in I\} = \bigcap_{i \in I} \left[\graph H_i \times \bby_{-i}\right]\]
is closed as it is the intersection of closed sets.
Finally, by Lemma~\ref{Lemma:FixedPtUC}\eqref{Lemma:FixedPtUC-1}, $\graph h$ is closed as well.
\end{proof}

Utilizing the above result on the closedness of the graph of parameterized Nash equilibria $h$, we can consider the convergence of equilibria.
\begin{corollary}\label{Cor:NashClosed}
Let $(x_n)_{n \in \bbn} \to x$ be a convergent sequence of parameters in the domain of $h$, i.e., so that $h(x_n),h(x) \neq \emptyset$ for every $n \in \bbn$.  Then
$h(x) \supseteq \limsup_{n \to \infty} h(x_n)$. 
\end{corollary}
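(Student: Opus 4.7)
The plan is to derive this corollary directly from the closedness of $\graph h$ established in Proposition~\ref{Prop:NashClosed}. The statement is essentially the standard fact that outer-semicontinuity of a set-valued mapping is equivalent to closedness of its graph, applied to the mapping $h$.

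First, I would unpack the definition of $\limsup_{n\to\infty} h(x_n)$: this is the Kuratowski outer limit, so any $y^* \in \limsup_{n\to\infty} h(x_n)$ arises as the limit of some sequence $y^{n_k} \to y^*$ where $y^{n_k} \in h(x_{n_k})$ for a subsequence $(x_{n_k})_{k\in\bbn}$ of $(x_n)_{n\in\bbn}$ (I would cite the appropriate definition from Appendix~\ref{Sec:Background} for this). Each pair $(x_{n_k}, y^{n_k})$ then lies in $\graph h$, and because $x_{n_k} \to x$ and $y^{n_k} \to y^*$, we have $(x_{n_k}, y^{n_k}) \to (x, y^*)$ in the product topology on $\bbx \times \bby$.

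Second, I would invoke Proposition~\ref{Prop:NashClosed}: since $\graph h$ is closed, the limit point $(x, y^*)$ belongs to $\graph h$, which by definition means $y^* \in h(x)$. As $y^*$ was an arbitrary element of $\limsup_{n\to\infty} h(x_n)$, we conclude $\limsup_{n\to\infty} h(x_n) \subseteq h(x)$.

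There is no real obstacle here; the work has already been done in Proposition~\ref{Prop:NashClosed}. The only subtlety worth being careful about is ensuring the convention used for $\limsup$ of a sequence of sets matches the ``limit along a subsequence'' characterization, so that the closed-graph argument applies cleanly; this is a matter of citing the correct definition rather than a genuine difficulty. Note also that the hypothesis $h(x_n), h(x) \neq \emptyset$ is used only to guarantee that the statement is non-vacuous and that $x$ lies in the domain of $h$; the containment itself would hold trivially if $\limsup_{n\to\infty} h(x_n)$ were empty.
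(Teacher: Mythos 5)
Your proof is correct and follows essentially the same route as the paper: the paper simply cites Proposition~\ref{Prop:NashClosed} together with Proposition~\ref{Prop:ClosedLimit} (the general equivalence between closed graph and $F(x) \supseteq \limsup_{i} F(x_i)$), whereas you inline that implication using the sequential ``limit along a subsequence'' characterization of the Kuratowski upper limit. That characterization agrees with the paper's definition $\limsup_{n} h(x_n) = \bigcap_{n} \cl \bigcup_{m \geq n} h(x_m)$ because $\bbx$ and $\bby$ are metric spaces, so your subtlety is resolved exactly as you anticipate.
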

\begin{proof}
This follows directly from Propositions~\ref{Prop:NashClosed} and~\ref{Prop:ClosedLimit}.
\end{proof}

\begin{remark}\label{Rem:Unique}
Throughout this work we have assumed a multiplicity of Nash equilibria.  However, if $\operatorname{card}[h(\cdot)] = 1$ and $\bby$ is a compact space, then $h(x) = \lim_{n \to \infty} h(x_n)$ for any sequence $(x_n)_{n \in \bbn} \to x$.  To prove continuity of $h$, we use the closed graph theorem (see, e.g.,~\cite[Theorem 2.58]{AB07}).
\end{remark}

\begin{remark}\label{Rem:NashClosed}
While we assumed throughout this section that $\bbx$ and $\bby$ are metric spaces, these are stronger conditions than necessary for the results provided so far.  Proposition~\ref{Prop:NashClosed} only requires that $\bbx,\bby$ be Hausdorff spaces and Corollary~\ref{Cor:NashClosed} requires additionally that $\bby$ is a regular space.  As a consequence, the finite player assumption $|I| < \infty$ can also be dropped for the results of this section.
\end{remark}

\begin{example}\label{Ex:Nutz}
\cite{nutz2018casestudy} studies the mean field limit of a sequence of $n$ player games (as $n \to \infty$).  We wish to demonstrate how Corollary~\ref{Cor:NashClosed} (with generalizations as noted in Remark~\ref{Rem:NashClosed}) can immediately provide the main result of that paper while referring the reader to \cite{nutz2018casestudy} for the formalities and setup.

Consider a game with an infinite number of players indexed by the interval $I = [0,1]$.  Let $\bcal$ be the Borel $\sigma$-algebra of the player space $I$.  For this example we will parameterize our game by the measure of player importance on the equilibrium, i.e., consider $\bbx := \mcal$ to be the space of all probability measures on the measurable space $(I,\bcal)$ absolutely continuous w.r.t.\ the Lebesgue measure $\lambda$.  Leaving out the formalities of the game, we are interested in the set of Nash equilibria $h(\mu)$ for $\mu \in \mcal$; specifically, we are interested in the relation between the $n$ player game and the mean field limit, i.e., between discrete measures and the Lebesgue measure $\lambda \in \mcal$.  Consider the sequence of discrete probability measures $(\lambda_n)_{n \in \bbn} \subseteq \mcal$ defined by
\[\lambda_n(B) := \frac{1}{n} \sum_{k = 1}^n \mathbb{I}(\frac{k}{n} \in B) \quad \forall B \in \bcal.\]
Utilizing the metric $d_\mcal(\mu_1,\mu_2) := \sup_{\substack{a,b \in I \\ a \leq b}} |\mu_1([a,b]) - \mu_2([a,b])|$ on $\mcal$, $\lim_{n \to \infty} d_\mcal(\lambda_n,\lambda) = 0$ by definition.
A key result of \cite{nutz2018casestudy}, whose inclusion follows from Corollary~\ref{Cor:NashClosed}, is 
\[\left\{\lim_{n \to \infty} \rho_n \; | \; \rho_n \in h(\lambda_n) \; \forall n \in \bbn\right\} =: \liminf_{n \to \infty} h(\lambda_n) \subsetneq h(\lambda).\]
\end{example}

\subsection{Approximate Nash equilibria}\label{Sec:Nash-Approximate}
Consider now a game such that $\{(x,y_{-i}) \mapsto f_i(x,y_i^*,y_{-i})\}_{y_i^* \in \bby_i}$ is uniformly equicontinuous.
Define $f_i^\epsilon: \bbx \times \bby \to [-\frac{1}{\epsilon} , \frac{1}{\epsilon}]$ with $f_i^\epsilon(x,y) := -\frac{1}{\epsilon} \vee f_i(x,y) \wedge \frac{1}{\epsilon}$ for any $\epsilon > 0$.  Further, construct $v_i^\epsilon: \bbx \times \bby \to [-\frac{1}{\epsilon} , \frac{1}{\epsilon}]$ for any $\epsilon > 0$ by
\begin{equation}\label{Eq:v-eps}
v_i^\epsilon(x,y) := \sup_{y_i^* \in \bby_i} \{f_i^\epsilon(x,y_i^*,y_{-i}) \; | \; y_i^* \in F_i(x,y_{-i})\}
\end{equation}
to be the value that player $i$ receives from this $\epsilon$-bounded game.

The following assumption is stated for ease of reference.  These properties are assumed for the entirety of this section.
\begin{assumption}\label{Ass:Nash-2}
Consider a game following Assumption~\ref{Ass:Nash-1}.  Additionally assume that $\{(x,y_{-i}) \mapsto f_i(x,y_i^*,y_{-i})\}_{y_i^* \in \bby_i}$ is uniformly equicontinuous, i.e., $f_i$ is uniformly continuous in $\bbx \times \bby_{-i}$ uniformly in $\bby_i$.
\end{assumption}

\begin{remark}\label{Rem:Equicontinuous}
If $f_i$ is uniformly continuous then the collection $\{(x,y_{-i}) \mapsto f_i(x,y_i^*,y_{-i})\}_{y_i^* \in \bby_i}$ is uniformly equicontinuous.  In particular, if $\bbx$ and $\bby_j$ are compact spaces for every player $j \in I$ (as is assumed in, e.g.,~\cite{cn2010essential,page2015parameterized} or the motivating example of Section~\ref{Sec:Motivation}) then this holds following from Assumption~\ref{Ass:Nash-1}. 
\end{remark}

\begin{corollary}\label{Cor:veps}
Fix $\epsilon > 0$, then $v_i^\epsilon$ (defined in~\eqref{Eq:v-eps}) is continuous for every player $i \in I$.
\end{corollary}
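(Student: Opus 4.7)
The plan is to establish lower and upper semicontinuity of $v_i^\epsilon$ separately at an arbitrary $(x, y_{-i}) \in \bbx \times \bby_{-i}$, along any sequence $(x_n, y_{-i,n}) \to (x, y_{-i})$, by treating $v_i^\epsilon$ as the value of a parameterized optimization problem as in~\eqref{Eq:v-eps}. First I would record a reusable preparatory fact: $f_i^\epsilon$ is jointly continuous and the family $\{(x,y_{-i}) \mapsto f_i^\epsilon(x,y_i^*,y_{-i})\}_{y_i^* \in \bby_i}$ inherits the uniform equicontinuity of Assumption~\ref{Ass:Nash-2} from $f_i$, since the truncation $t \mapsto (-1/\epsilon) \vee t \wedge (1/\epsilon)$ is $1$-Lipschitz.

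For lower semicontinuity, I would fix $\delta>0$ and choose a near-maximizer $\hat y_i^* \in F_i(x,y_{-i})$ with $f_i^\epsilon(x,\hat y_i^*,y_{-i}) > v_i^\epsilon(x,y_{-i}) - \delta$. Lower hemicontinuity of $F_i$ (one half of its continuity) supplies $\hat y_{i,n}^* \in F_i(x_n,y_{-i,n})$ with $\hat y_{i,n}^* \to \hat y_i^*$. Uniform equicontinuity absorbs the shift in $(x_n,y_{-i,n})$ uniformly in the strategy argument, and continuity of $f_i$ in $y_i$ handles the move $\hat y_{i,n}^* \to \hat y_i^*$, so together they give $f_i^\epsilon(x_n,\hat y_{i,n}^*,y_{-i,n}) \to f_i^\epsilon(x,\hat y_i^*,y_{-i})$. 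Since $v_i^\epsilon(x_n,y_{-i,n}) \geq f_i^\epsilon(x_n,\hat y_{i,n}^*,y_{-i,n})$, taking $\liminf_n$ and then $\delta \searrow 0$ yields $\liminf_n v_i^\epsilon(x_n,y_{-i,n}) \geq v_i^\epsilon(x,y_{-i})$.

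For upper semicontinuity I would dually select $y_{i,n}^* \in F_i(x_n,y_{-i,n})$ with $f_i^\epsilon(x_n,y_{i,n}^*,y_{-i,n}) > v_i^\epsilon(x_n,y_{-i,n}) - 1/n$. Uniform equicontinuity kills the difference $|f_i^\epsilon(x_n,y_{i,n}^*,y_{-i,n}) - f_i^\epsilon(x,y_{i,n}^*,y_{-i})|$, reducing the goal to $\limsup_n f_i^\epsilon(x,y_{i,n}^*,y_{-i}) \leq v_i^\epsilon(x,y_{-i})$. Upper hemicontinuity of $F_i$ forces $d_i(y_{i,n}^*, F_i(x,y_{-i})) \to 0$, so I can pick $\hat y_{i,n}^* \in F_i(x,y_{-i})$ with $d_i(y_{i,n}^*,\hat y_{i,n}^*) \to 0$; continuity of $f_i^\epsilon(x,\cdot,y_{-i})$ should then drive the gap $f_i^\epsilon(x,y_{i,n}^*,y_{-i}) - f_i^\epsilon(x,\hat y_{i,n}^*,y_{-i})$ to zero, whereupon $f_i^\epsilon(x,\hat y_{i,n}^*,y_{-i}) \leq v_i^\epsilon(x,y_{-i})$ closes the argument.

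The hard part will be this final closing step of upper semicontinuity: because $F_i(x,y_{-i})$ need only be closed and non-empty, neither $y_{i,n}^*$ nor $\hat y_{i,n}^*$ is guaranteed to have a convergent subsequence in $\bby_i$, so continuity of $f_i^\epsilon(x,\cdot,y_{-i})$ cannot simply be evaluated at a single limit point. I would address this either by invoking the maximum-theorem variant summarized in Appendix~\ref{Sec:BRF}, which is tailored to non-compact constraint correspondences via precisely the uniform equicontinuity structure of Assumption~\ref{Ass:Nash-2}, or by a subsequence-and-diagonal argument that uses boundedness of $f_i^\epsilon$ in $[-1/\epsilon,1/\epsilon]$ to extract a limit of $f_i^\epsilon(x,y_{i,n}^*,y_{-i})$ and pairs it with the nearest-approximator control $d_i(y_{i,n}^*,\hat y_{i,n}^*) \to 0$ through the pointwise continuity of $f_i^\epsilon(x,\cdot,y_{-i})$.
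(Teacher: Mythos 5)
Your lower-semicontinuity half is sound, but the upper-semicontinuity half has a genuine gap at exactly the point you flag, and neither of your two proposed patches closes it as stated. The step ``continuity of $f_i^\epsilon(x,\cdot,y_{-i})$ drives the gap $f_i^\epsilon(x,y_{i,n}^*,y_{-i}) - f_i^\epsilon(x,\hat y_{i,n}^*,y_{-i})$ to zero'' needs \emph{uniform} continuity of $f_i$ in the strategy variable $y_i$, which is not assumed anywhere: Assumption~\ref{Ass:Nash-2} gives uniform equicontinuity of $(x,y_{-i}) \mapsto f_i(x,y_i^*,y_{-i})$ \emph{uniformly over} $y_i^*$, i.e., uniformity in the parameter, not in $y_i^*$ itself. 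With $F_i(x,y_{-i})$ merely closed and non-compact, $d_i(y_{i,n}^*,\hat y_{i,n}^*) \to 0$ is compatible with both sequences escaping to infinity, where pointwise continuity at a drifting base point controls nothing; a bounded continuous $f_i$ with ever-sharper spikes near a closed unbounded feasible set defeats the argument. Your fallback (b) does not repair this: boundedness of $f_i^\epsilon$ in $[-1/\epsilon,1/\epsilon]$ gives compactness of the \emph{values}, not of the \emph{domain} $\bby_i$, so a subsequence of the real numbers $f_i^\epsilon(x,y_{i,n}^*,y_{-i})$ converges to something, but you have no way to identify that limit with a value attained on (or approximated from within) $F_i(x,y_{-i})$.

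Your fallback (a) is the paper's actual proof: the entire argument is a one-line application of Theorem~\ref{Thm:MaxCompact}. That theorem exists precisely to supply the compactness your direct argument lacks. Its proof passes to the Stone--\v{C}ech compactification $\bar\bby_i$ of $\bby_i$: the truncation of the range to the compact set $[-1/\epsilon,1/\epsilon]$ (this is the real role of $\epsilon$, beyond the Lipschitz remark you make) guarantees a continuous extension $\bar f_{i,x}$ of $f_i^\epsilon(x,\cdot,y_{-i})$ to $\bar\bby_i$; the uniform equicontinuity in $(x,y_{-i})$ makes the extended function jointly continuous; the compactified constraint map $\cl\beta[F_i(\cdot)]$ is continuous with compact values; and then the classical Berge maximum theorem applies, with the supremum over the closure agreeing with the original supremum by density. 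If you want a self-contained proof rather than a citation, you must reproduce that compactification step (or an equivalent device); the direct nearest-approximator argument cannot be completed under the stated hypotheses.
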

\begin{proof}
This follows from an application of Theorem~\ref{Thm:MaxCompact}.
\end{proof}

Consider now the \emph{approximate} Nash games defined by the $\vec\epsilon$-best response functions $H_i^{\vec\epsilon}: \bbx \times \bby \to \bby_i$ satisfying
\begin{align}
\label{Eq:H-eps} H_i^{\vec\epsilon}(x,y) &:= \{y_i^* \in \bby_i \; | \; f_i^{\epsilon_2}(x,y_i^*,y_{-i}) > v_i^{\epsilon_2}(x,y) - \epsilon_1, \; y_i^* \in B_{i,\epsilon_3}[F_i(x,y_{-i})]\}
\end{align}
for any $\vec\epsilon \in \bbr^3_{++}$
where $B_{i,\epsilon}[Y_i] := \{y_i \in \bby_i \; | \; \inf_{\bar y_i \in Y_i} d_i(y_i,\bar y_i) < \epsilon\}$ is the (open) ball of size $\epsilon$ surrounding $Y_i \subseteq \bby_i$.
With these approximate best responses, we introduce the associated approximate Nash equilibria
\begin{align}
\label{Eq:h-eps} h^{\vec\epsilon}(x) &:= \FIX_{y \in \bby} H^{\vec\epsilon}(x,y) = \{y \in \bby \; | \; y \in H^{\vec\epsilon}(x,y)\}
\end{align}
for every $x \in \bbx$.  

\begin{remark}
If $\bby_i$ are compact spaces for every $i \in I$ then $\epsilon_2$ is no longer required in \eqref{Eq:H-eps} or \eqref{Eq:h-eps}.  If $F_i \equiv \bby_i$ for every player $i \in I$ then neither $\epsilon_2$ nor $\epsilon_3$ are required for the results of this work.
\end{remark}

With these approximate Nash games, we wish to consider a modification to the original game, i.e.,
\begin{equation}\label{Eq:h-star}
h^*(x) := \lim_{\vec\epsilon \searrow 0} h^{\vec\epsilon}(x) \quad \forall x \in \bbx.
\end{equation}
This set-theoretic limit exists due to the monotonicity of $\vec\epsilon \in \bbr^3_{++} \mapsto h^{\vec\epsilon}(x)$ for a fixed parameter $x \in \bbx$.  In particular, we can characterize this limit by the intersection over approximation errors $\vec\epsilon$, i.e., $h^*(x) := \bigcap_{\vec\epsilon \in \bbr^3_{++}} h^{\vec\epsilon}(x)$ for any $x \in \bbx$.
The below proposition is used to demonstrate that this set of approximating equilibria are, indeed, equal to the original Nash equilibria.
\begin{proposition}\label{Prop:h*}
Consider a parameterized Nash game and its approximations.  For any $x \in \bbx$ the set of Nash equilibria is equivalent to the modified Nash equilibria introduced in~\eqref{Eq:h-star}, i.e., $h = h^*$.
\end{proposition}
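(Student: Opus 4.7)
The plan is to prove the equality by double inclusion, taking advantage of the fact that $h^*(x) = \bigcap_{\vec\epsilon \in \bbr_{++}^3} h^{\vec\epsilon}(x)$ (from the noted monotonicity), so membership in $h^*(x)$ is equivalent to satisfying the approximate equilibrium conditions for \emph{every} $\vec\epsilon$.

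For $h(x) \subseteq h^*(x)$, fix a genuine Nash equilibrium $y \in h(x)$ and any $\vec\epsilon \in \bbr_{++}^3$. Feasibility $y_i \in F_i(x,y_{-i})$ implies $y_i \in B_{i,\epsilon_3}[F_i(x,y_{-i})]$ trivially. For the optimality part, the Nash inequality $f_i(x,y) \geq f_i(x,y_i^*,y_{-i})$ for all feasible $y_i^*$, combined with the fact that the truncation $a \mapsto -\tfrac{1}{\epsilon_2} \vee a \wedge \tfrac{1}{\epsilon_2}$ is nondecreasing, yields $f_i^{\epsilon_2}(x,y) \geq f_i^{\epsilon_2}(x,y_i^*,y_{-i})$ for every feasible $y_i^*$; taking the supremum gives $f_i^{\epsilon_2}(x,y) \geq v_i^{\epsilon_2}(x,y) > v_i^{\epsilon_2}(x,y) - \epsilon_1$. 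So $y \in h^{\vec\epsilon}(x)$ and hence $y \in h^*(x)$.

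For the reverse inclusion $h^*(x) \subseteq h(x)$, let $y \in h^*(x)$ and fix any player $i \in I$. First I would handle feasibility: since $y_i \in B_{i,\epsilon_3}[F_i(x,y_{-i})]$ for every $\epsilon_3 > 0$ and $F_i(x,y_{-i})$ is closed (by Assumption~\ref{Ass:Nash-1}), sending $\epsilon_3 \searrow 0$ yields $y_i \in F_i(x,y_{-i})$. For the optimality inequality, fix an arbitrary competitor strategy $y_i^* \in F_i(x,y_{-i})$. I would choose $\epsilon_2 > 0$ small enough that $\tfrac{1}{\epsilon_2} > \max\{|f_i(x,y)|, |f_i(x,y_i^*,y_{-i})|\}$, so the truncation is inactive at both points, giving $f_i^{\epsilon_2}(x,y) = f_i(x,y)$ and $f_i^{\epsilon_2}(x,y_i^*,y_{-i}) = f_i(x,y_i^*,y_{-i})$. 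Using $y \in h^{\vec\epsilon}(x)$ for all $\epsilon_1$:
\begin{equation*}
f_i(x,y) = f_i^{\epsilon_2}(x,y) > v_i^{\epsilon_2}(x,y) - \epsilon_1 \geq f_i^{\epsilon_2}(x,y_i^*,y_{-i}) - \epsilon_1 = f_i(x,y_i^*,y_{-i}) - \epsilon_1,
\end{equation*}
and letting $\epsilon_1 \searrow 0$ delivers the Nash inequality $f_i(x,y) \geq f_i(x,y_i^*,y_{-i})$. Since $i$ and $y_i^*$ were arbitrary, $y \in h(x)$.

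The main conceptual obstacle is that the paper works over potentially \emph{non-compact} strategy spaces $\bby_i$, so without the truncation $f_i^{\epsilon_2}$ the value $v_i$ could be infinite and the sup-condition $f_i(x,y) > v_i(x,y) - \epsilon_1$ would be vacuous or ill-defined. The trick in the reverse inclusion is therefore to pick $\epsilon_2$ \emph{depending on the two specific points} $y$ and $(y_i^*,y_{-i})$ so that we reduce to a comparison on the untruncated $f_i$ at those points only; this circumvents the need for uniform boundedness of $f_i$ over the whole non-compact space and avoids invoking the equicontinuity hypothesis of Assumption~\ref{Ass:Nash-2}, which is needed elsewhere but not here.
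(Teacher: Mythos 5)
Your proof is correct, and it follows the same skeleton as the paper's (double inclusion via the characterization $h^*(x)=\bigcap_{\vec\epsilon} h^{\vec\epsilon}(x)$, with feasibility recovered from $\inf_{\bar y_i \in F_i(x,y_{-i})} d_i(y_i,\bar y_i) < \epsilon_3$ for all $\epsilon_3>0$ plus closedness of $F_i(x,y_{-i})$). Where you genuinely diverge is the optimality step. The paper aims to show $f_i(x,y^*) = v_i(x,y^*)$ directly, which forces a case distinction: if $v_i(x,y^*) \in \bbr$ for all $i$ it picks $\epsilon_2 < 1/\max_i|v_i(x,y^*)|$ and passes to the limit in $\epsilon_1$; if some $v_j(x,y^*) = +\infty$ it derives a contradiction by choosing $\epsilon_2 < 1/|f_j(x,y^*)|$ so that $v_j^{\epsilon_2}(x,y^*) = 1/\epsilon_2$ strictly exceeds $f_j(x,y^*)$ by more than a suitably small $\epsilon_1$. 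You instead fix an arbitrary feasible competitor $y_i^*$ and choose $\epsilon_2$ adapted to the \emph{two points} $y$ and $(y_i^*,y_{-i})$, using only $v_i^{\epsilon_2}(x,y) \geq f_i^{\epsilon_2}(x,y_i^*,y_{-i})$ before sending $\epsilon_1 \searrow 0$. This buys you a uniform treatment: the pointwise Nash inequality $f_i(x,y)\geq f_i(x,y_i^*,y_{-i})$ for all feasible $y_i^*$ automatically forces $v_i(x,y) \leq f_i(x,y) < \infty$, so the infinite-value case never needs separate handling, and you correctly observe that neither argument uses the equicontinuity of Assumption~\ref{Ass:Nash-2}. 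The only cost is that your $\epsilon_2$ depends on the competitor, but since the claim is quantified over all $\vec\epsilon$, that is harmless. A slight bonus of your route: the paper's case (i) tacitly assumes the truncation is inactive at $f_i(x,y^*)$ itself (it only controls $|v_i(x,y^*)|$), whereas your choice of $\epsilon_2$ explicitly controls both evaluation points.
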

\begin{proof}
First, $h(x) \subseteq h^*(x)$ for any $x \in \bbx$ trivially by construction since $h(x) \subseteq h^{\vec\epsilon}(x)$ for every $x \in \bbx$ and every $\vec\epsilon \in \bbr^3_{++}$.
Now consider $y^* \in h^*(x)$. That is, $f_i(x,y^*) > v_i^{\epsilon_2}(x,y^*) - \epsilon_1$ and $y_i^* \in B_{i,\epsilon_3}[F_i(x,y_{-i}^*)]$ for every $i \in I$ and every $\vec\epsilon \in \bbr^3_{++}$.  First, this implies that $y_i^* \in F_i(x,y_{-i}^*)$ (for every player $i \in I$) since $\inf_{\bar y_i \in F_i(x,y_{-i}^*)} d_i(y_i^*,\bar y_i) < \epsilon_3$ for every $\epsilon_3 > 0$.  Therefore to prove that $y^* \in h(x)$, it is sufficient to show that 
\[f_i(x,y^*) = v_i(x,y^*) := \sup_{y_i \in \bby_i} \left\{f_i(x,y_i,y_{-i}^*) \; | \; y_i \in F_i(x,y_{-i}^*)\right\}\] for every player $i \in I$.  
To complete this proof we consider 2 cases:
\begin{enumerate}
\item First, assume $v_i(x,y^*) \in \bbr$ for every player $i \in I$.  Let $\epsilon_2 < \frac{1}{\max_{i \in I} |v_i(x,y^*)|}$ (with $1/0 = +\infty$), then $f_i^{\epsilon_2}(x,y^*) = f_i(x,y^*)$ and $v_i^{\epsilon_2}(x,y^*) = v_i(x,y^*)$ for every player $i \in I$ since $f_i(x,y^*) \leq v_i(x,y^*)$ by definition.  Thus, by construction of $y^* \in h^*(x)$, it follows that $f_i(x,y^*) > v_i(x,y^*) - \epsilon_1$ for every $\epsilon_1 > 0$ and every player $i \in I$.  As such, $f_i(x,y^*) \geq v_i(x,y^*)$ and, as previously mentioned, $f_i(x,y^*) \leq v_i(x,y^*)$ by definition.
\item Second, assume there exists some player $j \in I$ such that $v_j(x,y^*) = +\infty$.  By construction, this implies implies $h(x) = \emptyset$, thus we wish to show a contradiction to $y^* \in h^*(x)$.  Fix some $j \in I$ such that $v_j(x,y^*) = +\infty$.  Consider $\epsilon_2 < \frac{1}{|f_j(x,y^*)|}$ so that $f_j^{\epsilon_2}(x,y^*) = f_j(x,y^*)$ and $v_j^{\epsilon_2}(x,y^*) = \frac{1}{\epsilon_2} > f_j(x,y^*)$.  Consider also $\epsilon_1 < \frac{1}{\epsilon_2} - f_j(x,y^*)$.  Then, by construction of $\vec\epsilon$, $y_j^* \not\in H_j^{\vec\epsilon}(x,y^*)$ contradicting the assumption that $y^* \in h^{\vec\epsilon}(x)$ providing the contradiction.  
\end{enumerate}
\end{proof}

This brings us, now, to the main result of this work.  Namely, that we can characterize the set of all Nash equilibria under approximations of the game and approximations of the parameter. For ease of reference, the assumptions utilized in the below theorem are summarized in Assumptions~\ref{Ass:Nash-1} and~\ref{Ass:Nash-2}.
\begin{theorem}\label{Thm:Approx-Limit}
Let $(x_n)_{n \in \bbn} \to x$ be a convergent sequence of parameters in the domain of $h$, i.e., so that $h(x_n),h(x) \neq \emptyset$ for every $n \in \bbn$.  Then
\begin{align*}
h(x) &= \lim_{\vec\epsilon \searrow 0} \liminf_{n \to \infty} h^{\vec\epsilon}(x_n) = \lim_{\vec\epsilon \searrow 0} \limsup_{n \to \infty} h^{\vec\epsilon}(x_n).
\end{align*}
\end{theorem}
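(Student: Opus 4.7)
The plan is to prove the chain of inclusions
\begin{equation*}
h(x) \;\subseteq\; \lim_{\vec\epsilon \searrow 0} \liminf_{n \to \infty} h^{\vec\epsilon}(x_n) \;\subseteq\; \lim_{\vec\epsilon \searrow 0} \limsup_{n \to \infty} h^{\vec\epsilon}(x_n) \;\subseteq\; h(x),
\end{equation*}
where the middle containment is immediate from $\liminf_n A_n \subseteq \limsup_n A_n$. Since $\vec\epsilon \mapsto h^{\vec\epsilon}(\cdot)$ is monotone, both outer limits can be rewritten as intersections over $\vec\epsilon \in \bbr^3_{++}$, and Proposition~\ref{Prop:h*} identifies $\bigcap_{\vec\epsilon} h^{\vec\epsilon}(x)$ with $h(x)$. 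So it suffices to establish the outer two inclusions.

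For the lower inclusion, I fix $y \in h(x)$ and an arbitrary $\vec\epsilon > 0$, and show that the constant sequence $y_n \equiv y$ belongs to $h^{\vec\epsilon}(x_n)$ for all sufficiently large $n$. Since $y \in h(x)$, Definition~\ref{Defn:NashEquil} gives $y_i \in F_i(x,y_{-i})$ and $f_i(x,y) = v_i(x,y) \in \bbr$ for every player $i$. A short case check (on whether $|f_i(x,y)|$ exceeds $1/\epsilon_2$) then shows $f_i^{\epsilon_2}(x,y) = v_i^{\epsilon_2}(x,y)$, so joint continuity of $f_i^{\epsilon_2}$ together with Corollary~\ref{Cor:veps} forces $f_i^{\epsilon_2}(x_n,y) - v_i^{\epsilon_2}(x_n,y) \to 0 > -\epsilon_1$, which gives the strict inequality in~\eqref{Eq:H-eps} eventually. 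For the feasibility condition, lower hemicontinuity of $F_i$ applied at $(x,y_{-i})$ produces $z_n \in F_i(x_n,y_{-i})$ with $z_n \to y_i$, so $d_i(y_i,z_n) < \epsilon_3$ eventually and hence $y_i \in B_{i,\epsilon_3}[F_i(x_n,y_{-i})]$.

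For the upper inclusion, I fix $y \in \bigcap_{\vec\epsilon} \limsup_n h^{\vec\epsilon}(x_n)$ and a target $\vec\epsilon > 0$, and show $y \in h^{\vec\epsilon}(x)$; by Proposition~\ref{Prop:h*} this yields $y \in h(x)$. The central subtlety is that the conditions defining $h^{\vec\epsilon}$ in~\eqref{Eq:H-eps} use strict inequalities and open balls, which can fail in a limit; I handle this by applying the limsup hypothesis at the strictly tighter tolerance $\vec\epsilon' := (\tfrac{\epsilon_1}{2},\epsilon_2,\tfrac{\epsilon_3}{2})$. This yields a subsequence $y_k \to y$ with $y_k \in h^{\vec\epsilon'}(x_{n_k})$. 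Passing to the limit, continuity of $f_i^{\epsilon_2}$ and $v_i^{\epsilon_2}$ (Corollary~\ref{Cor:veps}) send $f_i^{\epsilon_2}(x_{n_k},y_k) > v_i^{\epsilon_2}(x_{n_k},y_k) - \tfrac{\epsilon_1}{2}$ to $f_i^{\epsilon_2}(x,y) \geq v_i^{\epsilon_2}(x,y) - \tfrac{\epsilon_1}{2} > v_i^{\epsilon_2}(x,y) - \epsilon_1$. For the feasibility condition, upper hemicontinuity of $F_i$ gives $F_i(x_{n_k},(y_k)_{-i}) \subseteq B_{i,\eta}[F_i(x,y_{-i})]$ for any $\eta > 0$ and large $k$; combined with the triangle inequality and continuity of the point-to-set distance in its first argument, this produces $d_i(y_i,F_i(x,y_{-i})) \leq \tfrac{\epsilon_3}{2} < \epsilon_3$, i.e., $y_i \in B_{i,\epsilon_3}[F_i(x,y_{-i})]$.

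The main obstacle is the feasibility argument in the upper-bound step: transferring an open-ball containment along a sequence whose \emph{base set} is itself varying into an open-ball containment at the limit, since upper hemicontinuity of $F_i$ delivers only one-sided Hausdorff control. The mechanism that overcomes it is the slack built into choosing $\vec\epsilon' < \vec\epsilon$, which converts the non-strict inequality obtained in the limit into the required strict one; an analogous slack handles the strict inequality in the payoff condition. Everything else is a routine continuity passage, with the uniform equicontinuity hypothesis of Assumption~\ref{Ass:Nash-2} entering only indirectly via Corollary~\ref{Cor:veps}.
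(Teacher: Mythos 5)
Your proposal is correct, and its skeleton (reduce to the two outer inclusions, rewrite the $\vec\epsilon$-limits as intersections via monotonicity, and close the loop with Proposition~\ref{Prop:h*}) matches the paper's. The execution differs in two ways worth noting. For the lower inclusion, the paper shows $\graph H^{\vec\epsilon}$ is open, pushes this to $h^{\vec\epsilon}$ via Lemma~\ref{Lemma:FixedPtLC}, and invokes Proposition~\ref{Prop:LCLimit}; you instead verify directly that the constant recovery sequence $y_n \equiv y$ lands in $h^{\vec\epsilon}(x_n)$ eventually, which is a more elementary, sequence-level rendering of the same continuity facts (continuity of $f_i^{\epsilon_2}$ and $v_i^{\epsilon_2}$, lower continuity of $F_i$). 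For the upper inclusion, the paper introduces the auxiliary \emph{closed} approximate best response $\bar H^{\vec\epsilon}$, shows $\bar h^{\vec\epsilon}$ has closed graph, and exploits the sandwich $h^{\vec\epsilon} \subseteq \bar h^{\vec\epsilon} \subseteq h^{2\times\vec\epsilon}$; your tolerance-halving device $\vec\epsilon' = (\tfrac{\epsilon_1}{2},\epsilon_2,\tfrac{\epsilon_3}{2})$ is the mirror image of that sandwich (slack in the hypothesis rather than slack in the conclusion) and converts the non-strict inequalities obtained in the limit back into the strict ones defining $h^{\vec\epsilon}$ in exactly the same way. What the paper's formulation buys is reusability (the open/closed-graph lemmas and the net-based Kuratowski propositions are stated once in the appendices and apply beyond this theorem); what yours buys is self-containment and transparency about where each hypothesis enters --- in particular your handling of the feasibility constraint via upper continuity of $F_i$ and the $1$-Lipschitz point-to-set distance sidesteps the paper's slightly delicate appeal to Theorem~\ref{Thm:MaxCompact} for continuity of $(x,y)\mapsto \inf_{\bar y_i \in F_i(x,y_{-i})}[d_i(y_i^*,\bar y_i)\wedge(2\epsilon_3)]$. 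One small caveat: like the paper, you lean on the asserted monotonicity of $\vec\epsilon \mapsto h^{\vec\epsilon}$ to rewrite the limits as intersections; since you keep $\epsilon_2$ fixed when halving, your argument only genuinely needs monotonicity in $(\epsilon_1,\epsilon_3)$, which is immediate, so this is not a gap.
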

\begin{proof}
First, we wish to note that, for every player $i \in I$, $(x,y^*) \mapsto \inf_{\bar y_i \in F_i(x,y_{-i}^*)} [d_i(y_i^*,\bar y_i) \wedge (2 \epsilon_3)]$ is continuous by an application of Theorem~\ref{Thm:MaxCompact} since the modified metric objective is appropriately equicontinuous as a collection over $\bar y$. 

For fixed approximation level $\vec\epsilon \in \bbr^3_{++}$, $H^{\vec\epsilon}$ and $h^{\vec\epsilon}$ have open graphs in their respective topologies.
Consider the graph of $H^{\vec\epsilon}$:
\begin{align*}
\graph H^{\vec\epsilon} &= \left\{(x,y,y^*) \in \bbx \times \bby \times \bby \; \left| \; \begin{array}{ll} \forall i \in I: & f_i^{\epsilon_2}(x,y_i^*,y_{-i}) > v_i^{\epsilon_2}(x,y) - \epsilon_1, \\ & y_i^* \in B_{i,\epsilon_3}[F_i(x,y_{-i})]\end{array}\right.\right\}\\
&= \bigcap_{i \in I} \{(x,y,y^*) \in \bbx \times \bby \times \bby \; | \; f_i^{\epsilon_2}(x,y_i^*,y_{-i}) > v_i^{\epsilon_2}(x,y) - \epsilon_1\} \cap\\
	&\qquad\qquad \bigcap_{i \in I} \{(x,y,y^*) \in \bbx \times \bby \times \bby \; | \; \inf_{\bar y_i \in F_i(x,y_{-i})} d_i(y_i^*,\bar y_i) < \epsilon_3\}\\
&= \bigcap_{i \in I} \{(x,y,y^*) \in \bbx \times \bby \times \bby \; | \;\epsilon_1 > v_i^{\epsilon_2}(x,y) - f_i^{\epsilon_2}(x,y_i^*,y_{-i})\} \cap\\
	&\qquad\qquad \bigcap_{i \in I} \{(x,y,y^*) \in \bbx \times \bby \times \bby \; | \; \inf_{\bar y_i \in F_i(x,y_{-i})} [d_i(y_i^*,\bar y_i) \wedge (2 \epsilon_3)] < \epsilon_3\}
\end{align*}
is the finite intersection of open sets by continuity of all involved functions.
Finally, $h^{\vec\epsilon}$ inherits this property from $H^{\vec\epsilon}$ as provided in Lemma~\ref{Lemma:FixedPtLC}\eqref{Lemma:FixedPtLC-1}.

In order to complete this proof, let us introduce a second (closed) $\vec\epsilon$-approximate best response function $\bar H_i^{\vec\epsilon}: \bbx \times \bby \to \bby_i$ satisfying
\begin{align*}
\bar H_i^{\vec\epsilon}(x,y) &:= \{y_i^* \in \bby_i \; | \; f_i^{\epsilon_2}(x,y_i^*,y_{-i}) \geq v_i^{\epsilon_2}(x,y) - \epsilon_1, \; y_i^* \in \cl B_{i,\epsilon_3}[F_i(x,y_{-i})]\}
\end{align*}
for any $\vec\epsilon \in \bbr^3_{++}$.
As before, with these approximate best responses, we introduce the associated approximate Nash equilibria
\begin{align}
\label{Eq:h-eps-cl} \bar h^{\vec\epsilon}(x) &:= \{y^* \in \bby \; | \; y^* \in \bar H^{\vec\epsilon}(x,y^*)\}
\end{align}
for every $x \in \bbx$.  
Note that, by construction, $h(x) \subseteq h^{\vec\epsilon}(x) \subseteq \bar h^{\vec\epsilon}(x) \subseteq h^{2 \times \vec\epsilon}(x)$ for any $x \in \bbx$ and any approximation error $\vec\epsilon \in \bbr^3_{++}$.

Now, for fixed approximation level $\vec\epsilon \in \bbr^3_{++}$, $\bar H^{\vec\epsilon}$ and $\bar h^{\vec\epsilon}$ have closed graphs in their respective topologies.
Consider the graph of $\bar H^{\vec\epsilon}$:
\begin{align*}
\graph \bar H^{\vec\epsilon} &= \left\{(x,y,y^*) \in \bbx \times \bby \times \bby \; \left| \; \begin{array}{ll} \forall i \in I: & f_i^{\epsilon_2}(x,y_i^*,y_{-i}) \geq v_i^{\epsilon_2}(x,y) - \epsilon_1, \\ & y_i^* \in \cl B_{i,\epsilon_3}[F_i(x,y_{-i})]\end{array}\right.\right\}\\
&= \bigcap_{i \in I} \{(x,y,y^*) \in \bbx \times \bby \times \bby \; | \; f_i^{\epsilon_2}(x,y_i^*,y_{-i}) \geq v_i^{\epsilon_2}(x,y) - \epsilon_1\} \cap\\
	&\qquad\qquad \bigcap_{i \in I} \{(x,y,y^*) \in \bbx \times \bby \times \bby \; | \; \inf_{\bar y_i \in F_i(x,y_{-i})} d_i(y_i^*,\bar y_i) \leq \epsilon_3\}\\
&= \bigcap_{i \in I} \{(x,y,y^*) \in \bbx \times \bby \times \bby \; | \;\epsilon_1 \geq v_i^{\epsilon_2}(x,y) - f_i^{\epsilon_2}(x,y_i^*,y_{-i})\} \cap\\
	&\qquad\qquad \bigcap_{i \in I} \{(x,y,y^*) \in \bbx \times \bby \times \bby \; | \; \inf_{\bar y_i \in F_i(x,y_{-i})} [d_i(y_i^*,\bar y_i) \wedge (2 \epsilon_3)] \leq \epsilon_3\}
\end{align*}
is the intersection of closed sets by continuity of all involved functions.
Finally, $\bar h^{\vec\epsilon}$ inherits this property from $\bar H^{\vec\epsilon}$ as provided in Lemma~\ref{Lemma:FixedPtUC}\eqref{Lemma:FixedPtUC-1}.

Therefore by construction of the set-theoretic limit and Propositions~\ref{Prop:h*},~\ref{Prop:ClosedLimit}, and~\ref{Prop:LCLimit},
\begin{align*}
h(x) &= \bigcap_{\vec\epsilon \in \bbr^3_{++}} h^{\vec\epsilon}(x) \subseteq \bigcap_{\vec\epsilon \in \bbr^3_{++}} \liminf_{n \to \infty} h^{\vec\epsilon}(x_n) \subseteq \bigcap_{\vec\epsilon \in \bbr^3_{++}} \limsup_{n \to \infty} h^{\vec\epsilon}(x_n) \\ 
&\subseteq \bigcap_{\vec\epsilon \in \bbr^3_{++}} \limsup_{n \to \infty} \bar h^{\vec\epsilon}(x_n) \subseteq \bigcap_{\vec\epsilon \in \bbr^3_{++}} \bar h^{\vec\epsilon}(x) \subseteq \bigcap_{\vec\epsilon \in \bbr^3_{++}} h^{2 \times \vec\epsilon}(x) = \bigcap_{\vec\epsilon \in \bbr^3_{++}} h^{\vec\epsilon}(x) = h(x).
\end{align*}
\end{proof}

\begin{remark}\label{Rem:Appox-Limit-Closed}
A small modification of the proof of Theorem~\ref{Thm:Approx-Limit} provides the alternative limiting result with respect to the \emph{closed} approximate Nash equilibria $\bar h^{\vec\epsilon}$ (defined in~\eqref{Eq:h-eps-cl}), i.e.,
\[h(x) = \lim_{\vec\epsilon \searrow 0} \liminf_{n \to \infty} \bar h^{\vec\epsilon}(x_n) = \lim_{\vec\epsilon \searrow 0} \limsup_{n \to \infty} \bar h^{\vec\epsilon}(x_n)\]
for $(x_n)_{n \in \bbn} \to x$ in the domain of $h$.
\end{remark}

\section{Conclusion}\label{Sec:Conclusion}
In this work we have constructed order relations for the set of all Nash equilibria.  In particular, we found that, for very general set of games, the limit of parameterized Nash equilibria are only a subset of all true equilibria. We then find certain sufficient conditions so that the true equilibria can be found as the limit of approximate parameterized Nash equilibria.  As such, we argue that more focus should be on the approximate Nash equilibria proposed in this work due to this convergence analysis and the inherent nature of the estimation of game parameters.  Notably, none of the results in this work rely on uniqueness or the compactness of the strategy spaces.  As such, these results are applicable for, e.g., mean field games.

As introduced in Example~\ref{Ex:Nutz} and discussed more thoroughly in \cite{nutz2018casestudy}, the mean field limit can result in more equilibria than those that are the limit of $n$ player games (as $n \to \infty$).  However, given a mean field game, Theorem~\ref{Thm:Approx-Limit} is not strong enough to determine a robust approximate $n$ player game for the desired mean field game.  We leave this as an important extension to consider in future works.

\section*{Acknowledgements}
The author would like to thank Jianfeng Zhang for many discussions on this subject.

\appendix
\section{Set-Valued Continuity}\label{Sec:Background}
In this section we will present definitions of continuity for set-valued or multivalued mappings, i.e., functions mapping into the power set of some space.  We will additionally provide a brief overview of some results on these forms of continuity from the literature.  Throughout this section we will let $\bbx$ and $\bby$ be Hausdorff spaces.  

\begin{definition}\label{Defn:Continuity}
A set-valued mapping $F: \bbx \to \pcal(\bby)$ is called \emph{(upper, lower) continuous} if it is continuous with respect to the (resp.\ upper, lower) Vietoris topology.
\end{definition}

\begin{remark}\label{Rem:ContDefn}
The mapping $F: \bbx \to \pcal(\bby)$ is continuous if, and only if, it is upper and lower continuous.  If $F$ is single-valued, i.e., $F(x) = \{f(x)\}$ for some function $f: \bbx \to \bby$, then $F$ is upper continuous if, and only if, $F$ is lower continuous if, and only if, $f$ is continuous.
\end{remark}

\begin{remark}
Upper (lower) continuity is often referred to as upper (resp.\ lower) hemicontinuity (in, e.g., \cite{AB07}), upper (resp.\ lower) semicontinuity (in, e.g., \cite{AF90,HP97,JR02selectors,RS98selectors}), and inner (resp.\ outer) continuity (in, e.g., \cite{RW09}) in the literature.  We use the terminology from \cite{KTZ15,GRTZ03,HS12-uc} to emphasize that a single-valued function is upper continuous if, and only if, it is lower continuous if, and only if, it is continuous.  This has the additional advantage of avoiding the need to distinguish single-valued semicontinuity from set-valued continuity concepts.
\end{remark}

The following equivalent representations for upper and lower continuity are standard in the literature (see \cite[Propositions 1.2.6 and 1.2.7]{HP97} and \cite[Lemmas 17.4 and 17.5]{AB07}).
\begin{proposition}\label{Prop:UC}
For a set-valued mapping $F: \bbx \to \pcal(\bby)$ the following are equivalent:
\begin{enumerate}
\item $F$ is upper continuous;
\item $F^+[V] := \{x \in \bbx \; | \; F(x) \subseteq V\}$ is open in $\bbx$ for any $V \subseteq \bby$ open;
\item $F^-[\bar V] := \{x \in \bbx \; | \; F(x) \cap \bar V \neq \emptyset\}$ is closed in $\bbx$ for any $\bar V \subseteq \bby$ closed.
\end{enumerate}
\end{proposition}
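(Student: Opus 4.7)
The plan is to establish the two equivalences (i)$\Leftrightarrow$(ii) and (ii)$\Leftrightarrow$(iii) separately, with the first coming directly from the definition of the upper Vietoris topology and the second from a complementation argument.

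First, recall that the upper Vietoris topology on $\pcal(\bby)$ is the topology generated by the basis consisting of sets of the form $\mathcal{V}^+ := \{Y \in \pcal(\bby) \; | \; Y \subseteq V\}$ for $V \subseteq \bby$ open. By Definition~\ref{Defn:Continuity}, $F$ is upper continuous if and only if $F^{-1}[\mathcal{U}]$ is open in $\bbx$ for every $\mathcal{U}$ in this topology. Since it suffices to check this on a basis, $F$ is upper continuous if and only if $F^{-1}[\mathcal{V}^+]$ is open in $\bbx$ for every open $V \subseteq \bby$. The key observation is that $F^{-1}[\mathcal{V}^+] = \{x \in \bbx \; | \; F(x) \in \mathcal{V}^+\} = \{x \in \bbx \; | \; F(x) \subseteq V\} = F^+[V]$, which immediately gives (i)$\Leftrightarrow$(ii).

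Next I would establish (ii)$\Leftrightarrow$(iii) by direct complementation. For any $\bar V \subseteq \bby$, one checks set-theoretically that
\[
F^-[\bar V] = \{x \in \bbx \; | \; F(x) \cap \bar V \neq \emptyset\} = \bbx \setminus \{x \in \bbx \; | \; F(x) \subseteq \bby \setminus \bar V\} = \bbx \setminus F^+[\bby \setminus \bar V].
\]
Since $\bar V$ is closed if and only if $\bby \setminus \bar V$ is open, (ii) is equivalent to saying that $F^+[\bby \setminus \bar V]$ is open in $\bbx$ for every closed $\bar V \subseteq \bby$, which by the identity above is equivalent to $F^-[\bar V]$ being closed for every closed $\bar V$, i.e., (iii).

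There is essentially no obstacle here beyond being careful with the set-theoretic identity and with the basis argument for continuity with respect to a topology defined by subbasic/basic sets. The entire proof is a routine unpacking of definitions, and its value lies in making the upper Vietoris topology accessible through the more operational preimage characterizations in (ii) and (iii), which are the versions most frequently applied in subsequent arguments.
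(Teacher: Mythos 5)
Your proof is correct: the identification $F^{-1}[\mathcal{V}^+]=F^+[V]$ together with the observation that the sets $\mathcal{V}^+$ form a basis (closed under finite intersections, since $\mathcal{V}_1^+\cap\mathcal{V}_2^+=(V_1\cap V_2)^+$) gives (i)$\Leftrightarrow$(ii), and the complementation identity $F^-[\bar V]=\bbx\setminus F^+[\bby\setminus\bar V]$ gives (ii)$\Leftrightarrow$(iii). The paper does not prove this proposition itself but defers to standard references (\cite{HP97}, \cite{AB07}); your argument is exactly the standard one found there, so there is nothing further to compare.
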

\begin{proposition}\label{Prop:LC}
For a set-valued mapping $F: \bbx \to \pcal(\bby)$ the following are equivalent:
\begin{enumerate}
\item $F$ is lower continuous;
\item $F^-[V] := \{x \in \bbx \; | \; F(x) \cap V \neq \emptyset\}$ is open in $\bbx$ for any $V \subseteq \bby$ open;
\item $F^+[\bar V] := \{x \in \bbx \; | \; F(x) \subseteq \bar V\}$ is closed in $\bbx$ for any $\bar V \subseteq \bby$ closed.
\end{enumerate}
\end{proposition}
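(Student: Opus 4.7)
The plan is to reduce the statement to a straightforward unpacking of the definition of the lower Vietoris topology together with an elementary set-theoretic duality between $F^-$ and $F^+$. Throughout, write $\mcal{V}^-(V) := \{Y \in \pcal(\bby) \; | \; Y \cap V \neq \emptyset\}$ for $V \subseteq \bby$. By construction of the lower Vietoris topology, $\{\mcal{V}^-(V) \; | \; V \subseteq \bby \text{ open}\}$ is a subbase for the topology on $\pcal(\bby)$.

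For the equivalence (i) $\Leftrightarrow$ (ii), I would invoke the standard fact that a map into a topological space is continuous if and only if preimages of subbasic open sets are open. Since $F^{-1}[\mcal{V}^-(V)] = \{x \in \bbx \; | \; F(x) \cap V \neq \emptyset\} = F^-[V]$, lower continuity of $F$ is equivalent to $F^-[V]$ being open in $\bbx$ for every open $V \subseteq \bby$. The forward direction is immediate; for the reverse direction, one observes that arbitrary open sets in the lower Vietoris topology are unions of finite intersections of subbasic sets, and preimages commute with both unions and finite intersections, so openness of $F^-[V]$ for every open $V$ propagates to all opens.

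For the equivalence (ii) $\Leftrightarrow$ (iii), the key observation is the set-theoretic identity
\[
F^+[\bar V] \;=\; \{x \in \bbx \; | \; F(x) \subseteq \bar V\} \;=\; \{x \in \bbx \; | \; F(x) \cap \bar V^c = \emptyset\} \;=\; \bbx \setminus F^-[\bar V^c].
\]
Since $V \subseteq \bby$ is open if and only if $\bar V := V^c \subseteq \bby$ is closed, openness of $F^-[V]$ for every open $V$ is equivalent to $\bbx \setminus F^-[\bar V^c]$ being closed for every closed $\bar V$, which is exactly (iii).

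No single step looks genuinely hard; the only place where care is required is making sure the subbasic-open characterization in (i) $\Leftrightarrow$ (ii) is justified in the appropriate generality, since $\pcal(\bby)$ (including the empty set) is the codomain, so one should check that $\mcal{V}^-(\bby) = \pcal(\bby) \setminus \{\emptyset\}$ is handled correctly and that the definition of the lower Vietoris topology in the references (e.g.\ those cited in the preceding remark) agrees with the subbase used here. Once that is confirmed, the proof is a two-line bookkeeping argument combining the subbase characterization of continuity with the complementation identity above.
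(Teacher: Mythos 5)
Your proof is correct. The paper offers no proof of this proposition at all---it is stated as a standard fact with a pointer to the cited literature (Hu--Papageorgiou, Proposition 1.2.7, and Aliprantis--Border, Lemma 17.5)---and your argument, namely the subbase characterization of continuity for (i)$\Leftrightarrow$(ii) combined with the complementation duality $F^+[\bar V] = \bbx \setminus F^-[\bar V^{c}]$ for (ii)$\Leftrightarrow$(iii), is precisely the standard argument given in those references, including your correct attention to the fact that the codomain here is the full power set $\pcal(\bby)$ rather than only the nonempty closed subsets.
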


We additionally wish to provide simple conditions for upper and lower continuity via the graphs of our multivalued functions.
\begin{theorem}\label{Thm:ClosedGraphThm}
Let $\bby$ be a regular topological space.  Consider the set-valued mapping $F: \bbx \to \hat\pcal_f(\bby) := \{Y \in \pcal(\bby) \; | \; Y = \cl(Y)\}$.
\begin{enumerate}
\item $F$ has a closed graph, i.e.,
	\[\graph F := \{(x,y) \in \bbx \times \bby \; | \; y \in F(x)\} \text{ is closed in } \bbx \times \bby,\]
	if $F$ is upper continuous;
\item $F$ is upper continuous if $\bby$ is a compact space and $F$ has a closed graph.
\end{enumerate}
\end{theorem}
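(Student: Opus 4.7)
The plan is to prove both implications via the characterization of upper continuity from Proposition~\ref{Prop:UC}, which converts the topological property into a statement about preimages of open/closed sets in $\bby$. Regularity of $\bby$ will be essential for part (i) (to separate a point from a closed fiber $F(x)$), and compactness will be essential for part (ii) (through the tube lemma/closedness of projection).

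For part (i), I would take an arbitrary point $(x,y) \in (\bbx \times \bby) \setminus \graph F$ and exhibit an open neighborhood of it contained in the complement of $\graph F$. Since $y \notin F(x)$ and $F(x) \in \hat\pcal_f(\bby)$ is closed, regularity of $\bby$ furnishes disjoint open sets $V, W \subseteq \bby$ with $y \in V$ and $F(x) \subseteq W$. By upper continuity and Proposition~\ref{Prop:UC}, $F^+[W] = \{x' \in \bbx : F(x') \subseteq W\}$ is an open neighborhood of $x$. Then $F^+[W] \times V$ is an open neighborhood of $(x,y)$, and it is disjoint from $\graph F$ because $F(x') \subseteq W$ while $V \cap W = \emptyset$. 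This shows the complement of $\graph F$ is open.

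For part (ii), assume $\bby$ is compact and $\graph F$ is closed. I will verify upper continuity using the closed-preimage form in Proposition~\ref{Prop:UC}: for arbitrary closed $\bar V \subseteq \bby$, show that $F^-[\bar V] = \{x \in \bbx : F(x) \cap \bar V \neq \emptyset\}$ is closed. The clean topological way is to observe that $\graph F \cap (\bbx \times \bar V)$ is closed in $\bbx \times \bby$ (intersection of two closed sets), and that the projection $\pi_\bbx : \bbx \times \bby \to \bbx$ is a closed map because $\bby$ is compact (this is the standard tube-lemma consequence). Hence $\pi_\bbx(\graph F \cap (\bbx \times \bar V)) = F^-[\bar V]$ is closed in $\bbx$, as required.

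The main obstacle I anticipate is not any individual step but choosing the cleanest formulation for (ii): one could alternatively argue with nets, extracting from any net $x_\alpha \to x$ with $y_\alpha \in F(x_\alpha) \cap \bar V$ a convergent subnet $y_\alpha \to y \in \bar V$ (by compactness) and then concluding $y \in F(x)$ via closedness of $\graph F$. Both arguments rely on exactly the same ingredients, and in both cases the key subtle point is that compactness of $\bby$ is what makes projection well-behaved; without compactness, a closed graph need not yield upper continuity (e.g.\ $F(x) = \{1/x\}$ for $x \neq 0$, $F(0) = \emptyset$, on $\bby = \bbr$).
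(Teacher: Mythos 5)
Your proof is correct and is essentially the argument the paper delegates to its citations (\cite[Proposition 2.17]{HP97} for part (i) and \cite[Theorem 17.11]{AB07} for part (ii)): regularity plus $F^+[W]$ open for the closed-graph direction, and the closed-projection/tube-lemma consequence of compactness for the converse. Both steps, including the identification $F^-[\bar V] = \pi_\bbx(\graph F \cap (\bbx \times \bar V))$ and the observation that the empty-value case is harmless since $\hat\pcal_f(\bby)$ admits $\emptyset$, check out.
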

\begin{proof}
This follows from \cite[Proposition 2.17]{HP97} and \cite[Theorem 17.11]{AB07}.
\end{proof}

\begin{lemma}\cite[Lemma 17.12]{AB07}\label{Lemma:OpenGraphLemma}
For set-valued mappings $F: \bbx \to \pcal(\bby)$ we have that $F$ is lower continuous if $F$ has open fibers, i.e.,
\[F^{-}(\{y\}) := \{x \in \bbx \; | \; y \in F(x)\}\]
is open for any $y \in \bby$.  And $F$ has open fibers if the graph of $F$ is open in the product topology.
\end{lemma}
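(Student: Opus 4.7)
The plan is to handle the two implications separately, each via an elementary unpacking of the relevant topological definitions, and then invoke Proposition~\ref{Prop:LC} to conclude lower continuity from the open-fiber property.

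For the first implication, assume that every fiber $F^{-}(\{y\})$ is open in $\bbx$. By Proposition~\ref{Prop:LC}, it suffices to show $F^{-}[V] := \{x \in \bbx \; | \; F(x) \cap V \neq \emptyset\}$ is open for every open $V \subseteq \bby$. I would then observe the set-theoretic identity
\[F^{-}[V] = \{x \in \bbx \; | \; \exists y \in V: y \in F(x)\} = \bigcup_{y \in V} F^{-}(\{y\}),\]
which expresses $F^{-}[V]$ as a union of sets that are open by hypothesis; since an arbitrary union of open sets is open, $F^{-}[V]$ is open. This gives lower continuity.

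For the second implication, fix $y \in \bby$ and let $x_0 \in F^{-}(\{y\})$, so $(x_0, y) \in \graph F$. Since $\graph F$ is open in the product topology on $\bbx \times \bby$, there exist open sets $U \subseteq \bbx$ and $V \subseteq \bby$ with $x_0 \in U$, $y \in V$, and $U \times V \subseteq \graph F$. For any $x \in U$ we then have $(x,y) \in U \times V \subseteq \graph F$, i.e., $y \in F(x)$, which shows $U \subseteq F^{-}(\{y\})$. Hence $x_0$ is interior to $F^{-}(\{y\})$, and since $x_0$ was arbitrary, $F^{-}(\{y\})$ is open.

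There is no real obstacle here: both implications reduce to writing out the basic relationships between the product topology, fibers, and small inverse images $F^{-}[V]$. The only minor care needed is to apply Proposition~\ref{Prop:LC} in the correct form and to remember that $U \times V$ with $U, V$ open is the standard basic open neighborhood in the product topology, which makes the extraction of the open box containing $(x_0,y)$ immediate.
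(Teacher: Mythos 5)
Your proof is correct. The paper gives no argument of its own for this lemma---it simply cites \cite[Lemma 17.12]{AB07}---and your two steps (writing $F^{-}[V] = \bigcup_{y \in V} F^{-}(\{y\})$ to get lower continuity from open fibers via Proposition~\ref{Prop:LC}, and extracting a basic open box $U \times V \subseteq \graph F$ to get open fibers from an open graph) constitute exactly the standard proof found in that reference, so there is nothing to add.
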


We conclude this section by providing comparison of continuity results with Kuratowski limits.
\begin{proposition}\cite[Proposition 2.13]{HP97}\label{Prop:ClosedLimit}
A set-valued mapping $F: \bbx \to \pcal(\bby)$ has a closed graph if, and only if,
\[F(x) \supseteq \limsup_{i \in I} F(x_i) := \bigcap_{i \in I} \cl \bigcup_{j \geq i} F(x_j)\]
for every net $(x_i)_{i \in I} \to x$ in the domain of $F$.
\end{proposition}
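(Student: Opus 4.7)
The plan is to prove the biconditional in two halves: first deducing the limsup containment from closedness of the graph via a net-construction argument, and then recovering closedness of the graph from the limsup containment by a direct tail argument.

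For the forward direction I fix a convergent net $(x_i)_{i \in I} \to x$ with $x_i \in \dom F$ and a point $y \in \limsup_{i \in I} F(x_i) = \bigcap_i \cl \bigcup_{j \geq i} F(x_j)$, and I aim to exhibit a net in $\graph F$ converging to $(x, y)$. To do this I form the directed set $J := I \times \ncal(y)$, where $\ncal(y)$ is the neighborhood filter of $y$ ordered by reverse inclusion, and $(i_1, U_1) \geq (i_2, U_2)$ means $i_1 \geq i_2$ and $U_1 \subseteq U_2$. For each $(i, U) \in J$, the membership $y \in \cl \bigcup_{j \geq i} F(x_j)$ supplies an index $j(i, U) \geq i$ and a point $y_{(i,U)} \in F(x_{j(i,U)}) \cap U$. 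The resulting net $\{(x_{j(i,U)}, y_{(i,U)})\}_{(i,U) \in J} \subseteq \graph F$ converges to $(x, y)$: the first coordinate converges because $j(i,U) \geq i$ is eventually beyond any fixed index of $I$, and the second coordinate converges because $y_{(i,U)} \in U$ and $U$ can be forced into any prescribed neighborhood of $y$. Closedness of $\graph F$ then yields $(x, y) \in \graph F$, i.e., $y \in F(x)$.

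For the converse I assume the limsup containment and let $((x_\alpha, y_\alpha))_\alpha \to (x, y)$ be any convergent net in $\graph F$, so that $x_\alpha \in \dom F$ and $x_\alpha \to x$. For each index $\alpha_0$, the tail $\{y_\alpha : \alpha \geq \alpha_0\}$ lies entirely within $\bigcup_{\beta \geq \alpha_0} F(x_\beta)$ and still converges to $y$, so $y \in \cl \bigcup_{\beta \geq \alpha_0} F(x_\beta)$. Intersecting over $\alpha_0$ gives $y \in \limsup_\alpha F(x_\alpha)$, and the hypothesis forces $y \in F(x)$, establishing $(x, y) \in \graph F$ and hence closedness of the graph.

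The main obstacle is the forward direction's net-selection step, since $\bby$ is only assumed Hausdorff; sequences are insufficient to witness closure in general, so the doubly-indexed directed set $I \times \ncal(y)$ must be used, and one must verify cofinality and product-topology convergence of the constructed net carefully. Beyond this net construction the argument is essentially bookkeeping with the definitions of graph closure and the Kuratowski limit superior.
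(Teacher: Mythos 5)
Your proof is correct. Note that the paper itself offers no argument for this proposition: it is quoted directly from Hu and Papageorgiou \cite[Proposition 2.13]{HP97}, so your write-up serves as a self-contained replacement for a citation rather than an alternative to an in-paper proof. Both halves are sound. The converse is the routine tail argument: a net $((x_\alpha,y_\alpha))_\alpha$ in $\graph F$ converging to $(x,y)$ automatically has $x_\alpha$ in the domain of $F$, each tail $\{y_\alpha : \alpha \geq \alpha_0\}$ lies in $\bigcup_{\beta \geq \alpha_0} F(x_\beta)$ and converges to $y$, whence $y \in \limsup_\alpha F(x_\alpha) \subseteq F(x)$. The forward direction is where the real content lies, and you handle its one delicate point correctly: since $\bby$ is merely Hausdorff, membership in each closure $\cl \bigcup_{j \geq i} F(x_j)$ must be witnessed by nets rather than sequences, and your doubly-indexed directed set $I \times \ncal(y)$ (product order, neighborhoods by reverse inclusion), the selection $j(i,U) \geq i$ with $y_{(i,U)} \in F(x_{j(i,U)}) \cap U$, and the coordinatewise verification that $(x_{j(i,U)}, y_{(i,U)}) \to (x,y)$ are exactly what is required; closedness of $\graph F$ then finishes the argument. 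This is essentially the standard proof one finds in the cited reference, so nothing is gained or lost in generality---what your version buys is that the proposition, which the paper leans on in Corollary~\ref{Cor:NashClosed} and Theorem~\ref{Thm:Approx-Limit}, no longer rests on an external source.
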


\begin{proposition}\label{Prop:LCLimit}
If a set-valued mapping $F: \bbx \to \pcal(\bby)$ is lower continuous then
\[F(x) \subseteq \liminf_{i \in I} F(x_i) := \{y \in \bby \; | \; \forall V \in \ncal(y) \; \exists i \in I: \; F(x_j) \cap V \neq \emptyset \; \forall j \geq i\}\]
for every net $(x_i)_{i \in I} \to x$ in the domain of $F$ where $\ncal(y)$ is the set of neighborhoods of $y \in \bby$.
\end{proposition}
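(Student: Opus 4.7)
The plan is to unpack the definition of $\liminf_{i \in I} F(x_i)$ and verify the defining property pointwise, using the topological characterization of lower continuity recorded in Proposition~\ref{Prop:LC}. Specifically, fix $y \in F(x)$ and a net $(x_i)_{i \in I} \to x$ in the domain of $F$; I need to show that for every $V \in \ncal(y)$ there exists some index $i \in I$ with $F(x_j) \cap V \neq \emptyset$ for all $j \geq i$.

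The first step is to reduce to open neighborhoods. Given an arbitrary $V \in \ncal(y)$, by definition $V$ contains an open neighborhood $U$ of $y$; if the property holds for $U$ it automatically holds for the larger set $V$, so I may replace $V$ by an open $U \in \ncal(y)$ without loss of generality.

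The second step is the key application of lower continuity. By Proposition~\ref{Prop:LC}, lower continuity of $F$ is equivalent to the statement that $F^-[U] = \{x' \in \bbx \; | \; F(x') \cap U \neq \emptyset\}$ is open in $\bbx$ whenever $U \subseteq \bby$ is open. Since $y \in F(x) \cap U$, we have $x \in F^-[U]$, and openness makes $F^-[U]$ a neighborhood of $x$ in $\bbx$. Convergence of the net $(x_i)_{i \in I} \to x$ then yields an index $i \in I$ such that $x_j \in F^-[U]$ for all $j \geq i$, which is exactly the assertion that $F(x_j) \cap U \neq \emptyset$ eventually, hence $F(x_j) \cap V \neq \emptyset$ eventually.

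Since the chosen neighborhood $V$ was arbitrary, this shows $y \in \liminf_{i \in I} F(x_i)$, and since $y \in F(x)$ was arbitrary the inclusion $F(x) \subseteq \liminf_{i \in I} F(x_i)$ follows. There is no real obstacle here: the result is essentially an immediate translation between the Vietoris-style open-preimage characterization of lower continuity and the net-theoretic $\liminf$ formulation, with the only minor subtlety being the passage from an arbitrary neighborhood to an open one, which is a standard topological move.
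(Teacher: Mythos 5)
Your proof is correct and follows essentially the same route as the paper's: both use the characterization of lower continuity via openness of $F^-[\cdot]$ (Proposition~\ref{Prop:LC}) to conclude that $F^-[V]$ is a neighborhood of $x$, and then invoke net convergence to get $F(x_j) \cap V \neq \emptyset$ eventually. Your explicit reduction from an arbitrary neighborhood $V$ to an open neighborhood $U \subseteq V$ is a detail the paper glosses over, but it is the same argument.
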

\begin{proof}
Let $y \in F(x)$ and take some neighborhood $V \in \ncal(y)$.  By lower continuity it follows that $F^-[V]$ is a neighborhood of $x$.  Therefore, there exists some $i \in I$ such that $x_j \in F^-[V]$ for every $j \geq i$.
\end{proof}

\section{Continuity of the value function and best response function}\label{Sec:BRF}
In this section we will present continuity results for parameterized optimization problems.  From the literature we can derive results on the value function (and in the case of the Berge maximum theorem, the optimizers) of such an optimization problem.  Throughout this section we will let $\bbx$ and $\bby$ be Hausdorff spaces except where otherwise indicated.
\begin{theorem}\label{Thm:MaxTheorem}
Let $f: \bbx \times \bby \to \bbr$ be some desired objective function and $F: \bbx \to \pcal(\bby)$.  Let $v: \bbx \to \bbr \cup \{\pm\infty\}$ be defined by
\begin{equation}\label{Eq:MaxTheorem}
v(x) := \sup\{f(x,y) \; | \; y \in F(x)\}
\end{equation}
for any $x \in \bbx$.
\begin{enumerate}
\item \label{Thm:MaxTheorem-1} The value function $v$ is upper semicontinuous if $f$ is upper semicontinuous and $F$ is upper continuous with nonempty and compact images.
\item \label{Thm:MaxTheorem-2} The value function $v$ is lower semicontinuous if $f$ is lower semicontinuous and $F$ is lower continuous.
\item \label{Thm:MaxTheorem-3} The value function $v$ is continuous if $f$ is continuous and $F$ is continuous with nonempty and compact images.  Further, the set of maximizers $V: \bbx \to \pcal(\bby)\backslash\{\emptyset\}$, defined by
\begin{equation}\label{Eq:MaxTheorem-2}
V(x) := \argmax\{f(x,y) \; | \; y \in F(x)\} = \{y \in F(x) \; | \; f(x,y) = v(x)\}
\end{equation}
for every $x \in \bbx$, is upper continuous with compact images.
\end{enumerate}
\end{theorem}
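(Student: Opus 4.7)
The plan is to prove the three assertions by standard Berge maximum theorem arguments, relying on the net characterizations of upper and lower continuity supplied by Propositions~\ref{Prop:UC} and~\ref{Prop:LC}. Two lifting facts will be the workhorses: \textbf{(a)} when $F$ is upper continuous with compact images and $x_i \to x$, every selection $y_i \in F(x_i)$ admits a subnet $y_{i_k} \to y$ with $y \in F(x)$; and \textbf{(b)} when $F$ is lower continuous, for every $y \in F(x)$ and every net $x_i \to x$ there exist a subnet $(x_{i_k})$ and selections $y_{i_k} \in F(x_{i_k})$ with $y_{i_k} \to y$. Property (b) follows by indexing over the directed set of pairs (index, open neighborhood of $y$) and invoking Proposition~\ref{Prop:LC}(ii); property (a) uses the fact that $F(x_i)$ is eventually contained in every open neighborhood of the compact set $F(x)$ by Proposition~\ref{Prop:UC}(ii), combined with the standard extraction of a cluster point from a net that is eventually in every open neighborhood of a compact Hausdorff set.

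For part~(\ref{Thm:MaxTheorem-1}), I fix $\epsilon > 0$, take a net $x_i \to x$, and select $y_i \in F(x_i)$ with $f(x_i, y_i) \geq v(x_i) - \epsilon$ (with the obvious modification when $v(x_i) = +\infty$). Lifting (a) yields a subnet $y_{i_k} \to y \in F(x)$, and upper semicontinuity of $f$ gives $v(x) \geq f(x, y) \geq \limsup_k f(x_{i_k}, y_{i_k}) \geq \limsup_k v(x_{i_k}) - \epsilon$; sending $\epsilon \searrow 0$ proves upper semicontinuity of $v$. For part~(\ref{Thm:MaxTheorem-2}), I fix $\epsilon > 0$, take $x_i \to x$, and pick $y \in F(x)$ with $f(x, y) \geq v(x) - \epsilon$. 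Lifting (b) produces a subnet with selections $y_{i_k} \in F(x_{i_k})$ and $y_{i_k} \to y$; lower semicontinuity of $f$ then gives $\liminf_k v(x_{i_k}) \geq \liminf_k f(x_{i_k}, y_{i_k}) \geq f(x, y) \geq v(x) - \epsilon$, and $\epsilon \searrow 0$ yields lower semicontinuity. Continuity of $v$ in part~(\ref{Thm:MaxTheorem-3}) is immediate from the first two parts.

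To finish part~(\ref{Thm:MaxTheorem-3}), I note that nonemptiness and compactness of $V(x)$ follow from applying the extreme value theorem to the continuous function $f(x,\cdot)$ on the compact set $F(x)$. For upper continuity of $V$, I verify the open-set criterion of Proposition~\ref{Prop:UC}: suppose $U \subseteq \bby$ is open with $V(x) \subseteq U$, and for contradiction take a net $x_i \to x$ and $y_i \in V(x_i) \setminus U$. Since $y_i \in F(x_i)$, lifting (a) produces a subnet $y_{i_k} \to y \in F(x)$. Joint continuity of $f$ together with continuity of $v$ (just established) gives $f(x, y) = \lim_k f(x_{i_k}, y_{i_k}) = \lim_k v(x_{i_k}) = v(x)$, hence $y \in V(x) \subseteq U$; but openness of $U$ and $y_{i_k} \to y$ force $y_{i_k} \in U$ eventually, contradicting $y_{i_k} \notin U$. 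The principal obstacle is justifying extraction (a) in the general Hausdorff, not necessarily metrizable, setting, which requires working with nets and invoking the cluster-point property of compact Hausdorff sets; the $\epsilon$-maximizer arguments themselves are routine.
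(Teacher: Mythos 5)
Your proof is correct in substance but takes a genuinely different route from the paper: the paper's own proof is a one-line citation of \cite[Lemmas 17.29 and 17.30]{AB07} and the Berge maximum theorem \cite[Theorem 17.31]{AB07}, whereas you reprove everything from scratch with a net argument built on the two lifting facts (a) and (b). Both lifting facts are correctly stated and correctly justified (eventual containment of $F(x_i)$ in every open neighborhood of the compact set $F(x)$ via Proposition~\ref{Prop:UC}, cluster-point extraction, and the (index, neighborhood) directed set for the lower continuous case), and your treatment of $V$ in part (iii) is exactly Berge's argument; the payoff of doing it by hand is that one sees precisely where compactness of the images enters (parts (i) and (iii) only) and that nothing beyond Hausdorffness of $\bby$ is needed. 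One step does need tightening: in parts (i) and (ii) you pass to a subnet and then bound $\limsup_k v(x_{i_k})$, respectively $\liminf_k v(x_{i_k})$, \emph{along that subnet}. Since for any subnet one only has $\liminf_i v(x_i) \le \liminf_k v(x_{i_k}) \le \limsup_k v(x_{i_k}) \le \limsup_i v(x_i)$, an upper bound on the subnet's limit superior (or a lower bound on its limit inferior) does not by itself control the corresponding quantity for the original net. The repair is standard and one line: before selecting the $y_i$, first replace $(x_i)$ by a subnet along which $v(x_i)$ converges in $[-\infty,+\infty]$ to $\limsup_i v(x_i)$ (resp.\ $\liminf_i v(x_i)$); every further subnet preserves that limit, and your inequalities then close the argument. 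Alternatively, part (ii) admits a subnet-free proof in the spirit of \cite[Lemma 17.29]{AB07}: for $\alpha < v(x)$ pick $y \in F(x)$ with $f(x,y) > \alpha$, take a basic open box $U \times W \ni (x,y)$ contained in the open set $\{f > \alpha\}$, and observe that $v > \alpha$ on the open neighborhood $U \cap F^-[W]$ of $x$.
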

\begin{proof}
These are trivial consequences of \cite[Lemmas 17.29 and 17.30]{AB07} and the Berge maximum theorem (see, e.g., \cite[Theorem 17.31]{AB07}).
\end{proof}

The following theorem provides an extension of Theorem~\ref{Thm:MaxTheorem}\eqref{Thm:MaxTheorem-3} in that it does \emph{not} require the compactness of the space $\bby$.  Under compactness of the space $\bby$, these results are equivalent by application of the closed graph theorem (Theorem~\ref{Thm:ClosedGraphThm}). 
\begin{theorem}\label{Thm:MaxGraph}
Let $f: \bbx \times \bby \to \bbr$ be continuous and $F: \bbx \to \pcal_f(\bby)$ be continuous.  The set of maximizers $V: \bbx \to \pcal(\bby)$, defined by \eqref{Eq:MaxTheorem-2}, has a closed graph in the product topology $\bbx \times \bby$.
\end{theorem}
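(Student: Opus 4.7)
The plan is to establish closedness of $\graph V \subseteq \bbx \times \bby$ by a direct net argument. Consider an arbitrary net $\{(x_\alpha, y_\alpha)\}_\alpha \subseteq \graph V$ converging to some $(x, y) \in \bbx \times \bby$; the goal is to show $y \in V(x)$. Unpacking~\eqref{Eq:MaxTheorem-2}, this splits into two claims: (a)~$y \in F(x)$, and (b)~$f(x, y) \geq f(x, z)$ for every $z \in F(x)$.

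Claim~(a) is immediate from the upper half of the continuity of $F$. Since $F$ is upper continuous with closed, nonempty images, Theorem~\ref{Thm:ClosedGraphThm} yields that $\graph F$ is closed; the net $(x_\alpha, y_\alpha)$ lies in $\graph F$ because $V(x_\alpha) \subseteq F(x_\alpha)$, and it converges to $(x, y)$, so $(x, y) \in \graph F$.

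For claim~(b), I would exploit the lower half of the continuity of $F$. By Proposition~\ref{Prop:LCLimit}, $F(x) \subseteq \liminf_\alpha F(x_\alpha)$. Fix $z \in F(x)$ and $\eps > 0$. Joint continuity of $f$ at $(x, z)$ produces open neighborhoods $U$ of $x$ in $\bbx$ and $W$ of $z$ in $\bby$ such that $f(x', w) > f(x, z) - \eps$ for every $(x', w) \in U \times W$. Since $x_\alpha \to x$, eventually $x_\alpha \in U$; since $z \in \liminf_\alpha F(x_\alpha)$, eventually $F(x_\alpha) \cap W \neq \emptyset$. For all such $\alpha$, pick $w_\alpha \in F(x_\alpha) \cap W$, and note that
\[
v(x_\alpha) = f(x_\alpha, y_\alpha) \;\geq\; f(x_\alpha, w_\alpha) \;>\; f(x, z) - \eps,
\]
where the equality uses $y_\alpha \in V(x_\alpha)$ and the first inequality follows from the definition of the supremum. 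Continuity of $f$ gives $f(x_\alpha, y_\alpha) \to f(x, y)$, and taking limits yields $f(x, y) \geq f(x, z) - \eps$. Sending $\eps \downarrow 0$ delivers~(b).

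The main obstacle is claim~(b): one cannot simply combine Theorem~\ref{Thm:MaxTheorem}\eqref{Thm:MaxTheorem-3} with Theorem~\ref{Thm:ClosedGraphThm}, because $\bby$ is not assumed compact, so the value function $v$ need not be upper semicontinuous (nor even finite-valued), and $V$ need not be upper continuous. The key to sidestepping this is to avoid working with $v$ globally: instead of comparing $v(x_\alpha)$ to $v(x)$, one compares $f(x_\alpha, y_\alpha)$ directly to $f(x_\alpha, w_\alpha)$ along a net $(w_\alpha)$ extracted from the lower continuity of $F$, and uses only pointwise joint continuity of $f$ to pass to the limit. In the compact case this extra care is unnecessary, and the statement reduces to a routine consequence of Berge's theorem.
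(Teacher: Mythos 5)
Your proof is correct and follows essentially the same route as the paper's: claim (a) via the closed graph theorem applied to the upper half of the continuity of $F$, and claim (b) via the lower half of the continuity of $F$ to produce feasible points $w_\alpha \in F(x_\alpha)$ near an arbitrary $z \in F(x)$, combined with continuity of $f$. The only cosmetic difference is that you argue directly with an $\eps$-neighborhood and Proposition~\ref{Prop:LCLimit}, whereas the paper argues by contradiction using a subnet along which $\hat y_j \to \hat y$; both hinge on exactly the same mechanism.
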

\begin{proof}
First, recall that $\graph V = \{(x,y) \in \bbx \times \bby \; | \; y \in V(x)\}$.  Consider the convergent net $(x_i,y_i)_{i \in I} \subseteq \graph V \to (x,y) \in \bbx \times \bby$.  To prove $\graph V$ is closed, we wish to show that $(x,y) \in \graph V$.  That is, $y \in F(x)$ and $f(x,y) \geq f(x,\hat y)$ for every $\hat y \in F(x)$.
\begin{enumerate}
\item By $F$ upper continuous with nonempty and closed values, the closed graph theorem (Theorem~\ref{Thm:ClosedGraphThm}) implies $\graph F$ is closed.  This implies that $(x,y) \in \graph F$ by $(x_i,y_i) \in \graph F$ for every $i \in I$, i.e., $y \in F(x)$.
\item Suppose $y \not\in V(x)$.  Then there exists some $\hat y \in F(x)$ such that $f(x,y) < f(x,\hat y)$.  By the lower continuity of $F$, there exists a subnet $(x_{i_j})_{j \in J} \to x$ and net $(\hat y_j)_{j \in J} \to \hat y$ such that $\hat y_j \in F(x_{i_j})$ for every $j \in J$.  By the continuity of the objective function we find:
\begin{align*}
\lim_{j \in J} f(x_{i_j},\hat y_j) &= f(x,\hat y) > f(x,y) = \lim_{j \in J} f(x_{i_j},y_{i_j}).
\end{align*}
Therefore for sufficiently large $j \in J$ it must follow that $f(x_{i_j},y_{i_j}) < f(x_{i_j},\hat y_j)$ which contradicts the initial assumption that $(x_{i_j},y_{i_j}) \in \graph V$ thus completing the proof.
\end{enumerate}
\end{proof}

\begin{theorem}\label{Thm:MaxCompact}
Let $\bbx$ be a metric space and let $\bby$ be a Tychonoff space.  Let $f: \bbx \times \bby \to K \subseteq \bbr$ for compact set $K$ be continuous and such that $\{f(\cdot,y)\}_{y \in \bby}$ is uniformly equicontinuous.
Additionally, let $F: \bbx \to \pcal(\bby)\backslash\{\emptyset\}$ be continuous.  The value function $v: \bbx \to \bbr \cup \{\pm \infty\}$, defined by \eqref{Eq:MaxTheorem}, is continuous.
\end{theorem}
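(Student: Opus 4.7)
My plan is to establish continuity of $v$ by separately proving lower and upper semicontinuity at an arbitrary $x \in \bbx$, using uniform equicontinuity of the family $\{f(\cdot, y)\}_{y \in \bby}$ to control variation in the parameter and the (respectively lower and upper) continuity of $F$ to control variation in the feasible set. Throughout I would use that $v$ takes values in the compact set $K$, so $v(x)$ is automatically finite, and that for any sequence $x_n \to x$ in $\bbx$, uniform equicontinuity yields $\sup_{y \in \bby} |f(x_n, y) - f(x, y)| \to 0$. The whole point of this strategy is to bypass the compact-image hypothesis on $F$ that the classical Berge maximum theorem (Theorem~\ref{Thm:MaxTheorem}\eqref{Thm:MaxTheorem-3}) requires.

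For lower semicontinuity, I would fix $\epsilon > 0$ and pick $y_0 \in F(x)$ with $f(x, y_0) > v(x) - \epsilon$. Continuity of $f(x, \cdot)$ makes $V := \{y \in \bby : f(x, y) > v(x) - \epsilon\}$ an open set that meets $F(x)$, so by lower continuity of $F$ there exist $y_n \in F(x_n) \cap V$ for all sufficiently large $n$. By uniform equicontinuity, eventually $f(x_n, y_n) > f(x, y_n) - \epsilon > v(x) - 2\epsilon$, and hence $v(x_n) \geq f(x_n, y_n) > v(x) - 2\epsilon$. Letting $\epsilon \searrow 0$ gives $\liminf_{n \to \infty} v(x_n) \geq v(x)$.

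The upper-semicontinuity half is where I expect the only genuine obstacle to appear: without compact images for $F$, the classical Berge argument, namely extracting a convergent subnet of near-maximizers $y_n \in F(x_n)$ with limit in $F(x)$, fails outright. My workaround is to test upper continuity of $F$ against the \emph{canonical} open set $W := \{y \in \bby : f(x, y) < v(x) + \epsilon\}$; since $f(x, y) \leq v(x) < v(x) + \epsilon$ for every $y \in F(x)$, one has $F(x) \subseteq W$ for free. Upper continuity of $F$ then forces $F(x_n) \subseteq W$ eventually, and combined with the uniform bound $|f(x_n, y) - f(x, y)| < \epsilon$ over $y \in \bby$ this yields
\[
v(x_n) \;=\; \sup_{y \in F(x_n)} f(x_n, y) \;\leq\; \sup_{y \in W} f(x_n, y) \;\leq\; \sup_{y \in W} f(x, y) + \epsilon \;\leq\; v(x) + 2\epsilon,
\]
so that $\limsup_{n \to \infty} v(x_n) \leq v(x)$ after letting $\epsilon \searrow 0$. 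Combining the two halves gives continuity of $v$ and avoids any appeal to compactness of $F(x)$ or of $\bby$.
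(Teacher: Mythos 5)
Your proposal is correct, but it proves the theorem by a genuinely different and more elementary route than the paper. The paper's proof compactifies: it embeds $\bby$ into its Stone-\v{C}ech compactification $\bar\bby$, extends $f(x,\cdot)$ to a continuous $\bar f$ on $\bbx \times \bar\bby$ (using uniform equicontinuity to justify an interchange of limits), replaces $F$ by the closure of its image under the embedding so that it has compact values, invokes the Berge maximum theorem in the form of Theorem~\ref{Thm:MaxTheorem}\eqref{Thm:MaxTheorem-3}, and finally checks that passing to closures does not change the supremum. You instead work directly with the value function, splitting continuity into the two semicontinuities and testing the lower and upper continuity of $F$ (in the sense of Propositions~\ref{Prop:LC} and~\ref{Prop:UC}) against the canonical open sets $\{y \in \bby : f(x,y) > v(x) - \epsilon\}$ and $\{y \in \bby : f(x,y) < v(x) + \epsilon\}$, with uniform equicontinuity supplying the bound $\sup_{y \in \bby} |f(x_n,y) - f(x,y)| \to 0$ that replaces the extraction of convergent subnets of near-maximizers; the compactness of $K$ enters only to guarantee $v(x) \in \bbr$, so that these $\epsilon$-bands are well defined. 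Both arguments are sound. Yours is shorter and, notably, never uses that $\bby$ is Tychonoff --- that hypothesis appears in the statement only so that the Stone-\v{C}ech embedding exists --- so your argument in fact establishes the result for an arbitrary topological space $\bby$. What the paper's detour buys is a reduction to a standard citable theorem and a compactified framework in which further Berge-type conclusions (e.g., upper continuity of maximizer correspondences with compact values) become available; for the value function alone, your direct argument is the cleaner one.
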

\begin{proof}
Define $\bar\bby$ to be the Stone-\v{C}ech compactification of $\bby$ with embedding $\beta: \bby \to \bar\bby$.  Let $\bar f_x: \bar\bby \to K$ be the unique continuous extension of $f(x,\cdot)$ for fixed $x \in \bbx$ (see, e.g., \cite[Theorem 2.79]{AB07}).
Define $\bar f: \bbx \times \bar\bby \to K$ by $\bar f(x,\bar y) := \bar f_x(\bar y)$ for any $x \in \bbx$ and $\bar y \in \bar\bby$.  By uniform equicontinuity and $\beta(\bby)$ being a dense subset of $\bar\bby$, we will show that $\bar f$ is a continuous mapping.  Take a net $(x_i,\bar y_i)_{i \in I} \to (x,\bar y)$ in $\bbx \times \bar\bby$ and net $(y_i^j)_{j \in J}$ such that $\lim_{j \in J} \beta(y_i^j) = \bar y_i$ for every $i \in I$, then
\begin{align*}
\lim_{i \in I} |\bar f(x,\bar y) - \bar f(x_i,\bar y_i)| &= \lim_{i \in I} |\bar f_x(\bar y) - \bar f_{x_i}(\bar y_i)|\\
&\leq \lim_{i \in I} \left(|\bar f_x(\bar y) - \bar f_x(\bar y_i)| + |\bar f_x(\bar y_i) - \bar f_{x_i}(\bar y_i)|\right)\\
&= \lim_{i \in I} |\bar f_x(\bar y_i) - \bar f_{x_i}(\bar y_i)|\\
&= \lim_{i \in I} \lim_{j \in J} |f(x,y_i^j) - f(x_i,y_i^j)|\\
&= \lim_{j \in J} \lim_{i \in I} |f(x,y_i^j) - f(x_i,y_i^j)| = 0.
\end{align*}
Define $\bar F: \bbx \to \pcal(\bar\bby)$ by $\bar F(x) := \cl\{\beta(y) \; | \; y \in F(x)\}$ with closure taken in $\bar\bby$.  Trivially, by construction, $\bar F$ has nonempty and closed (and therefore compact) images.  We will now show that $\bar F$ is continuous.  Define $G: \bbx \to \pcal(\bar\bby)$ by $G(x) := \beta[F(x)]$.  By construction, $G^-[\cdot] = F^-[\beta^{-1}[\cdot]]$ where $\beta^{-1}(\bar y) = \emptyset$ if $\bar y \in \bar\bby \backslash \beta[\bby]$.  By continuity of $F$ and $\beta$, it is trivial to see that $G$ must also be continuous.  Because $\bar F(x) = \cl G(x)$ it is continuous since the closure of a lower continuous function is lower continuous (\cite[Proposition 2.38]{HP97}) and the closure of an upper continuous function mapping into a normal range space (true of $\bar\bby$ as it is a compact Hausdorff space) is upper continuous (\cite[Proposition 2.40]{HP97}).
Finally, we wish to utilize Theorem~\ref{Thm:MaxTheorem}\eqref{Thm:MaxTheorem-3} in order to prove the result.  Using this result we note that $\bar v: \bbx \to K$, defined by
\[\bar v(x) := \sup\{\bar f(x,\bar y) \; | \; \bar y \in \bar F(x)\},\]
is continuous.  And by property of the supremum, and that $\bar f$ is an extension of $f$, we conclude
\begin{align*}
\bar v(x) &= \sup\{\bar f(x,\bar y) \; | \; \bar y \in \bar F(x)\}\\
&= \sup\{\bar f(x,\bar y) \; | \; \bar y \in G(x)\}\\
&= \sup\{\bar f(x,\beta(y)) \; | \; y \in F(x)\}\\
&= \sup\{f(x,y) \; | \; y \in F(x)\} = v(x).
\end{align*}
\end{proof}

\section{Continuity of parameterized fixed points}\label{Sec:FixedPt}
In this section we will provide continuity results and simple sensitivity analysis for the collection of fixed points for set-valued mappings.  These results follow from the definitions of continuity given in the prior section for the set-valued mapping.  
Throughout this section we will let $\bbx$ and $\bby$ be Hausdorff spaces except where otherwise indicated.
Additionally throughout, consider the fixed points of the mapping $H: \bbx \times \bby \to \pcal(\bby)$.  We will denote the parameterized fixed points of $H$ by the function $h: \bbx \to \pcal(\bby)$, i.e., 
\[h(x) := \FIX_{y \in \bby} H(x,y) = \{y \in \bby \; | \; y \in H(x,y)\} \quad \forall x \in \bbx.\]

The results in this section are generally trivial and highly related to the literature on data dependence of fixed points in, e.g., \cite{markin73,kirr97,rus14,PRS14}.  Though not the main focus of this work, the continuity results presented in this section are widely applicable in, e.g., convergence of PageRank~\cite{GHL2018pagerank} with respect to the underlying network topology or data dependence of financial systemic risk models such as those in \cite{EN01,CFS05,AW15,F15illiquid} where methodology to estimate system parameters are studied in, e.g., \cite{GV16}.  

\begin{lemma}\label{Lemma:FixedPtUC}
\begin{enumerate}
\item \label{Lemma:FixedPtUC-1} If $\graph H \subseteq \bbx \times \bby \times \bby$ is closed in the product topology then $\graph h \subseteq \bbx \times \bby$ is closed in the product topology.
\item \label{Lemma:FixedPtUC-2} If the properties of \eqref{Lemma:FixedPtUC-1} are satisfied and $\bby$ is a compact Hausdorff space then $h$ is an upper continuous multivalued map with closed and compact images.
\end{enumerate}
\end{lemma}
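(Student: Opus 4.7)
For part~\eqref{Lemma:FixedPtUC-1}, the plan is to realize $\graph h$ as the preimage of $\graph H$ under a continuous map. Define the diagonal embedding $\Delta: \bbx \times \bby \to \bbx \times \bby \times \bby$ by $\Delta(x,y) := (x,y,y)$. Each coordinate of $\Delta$ is a projection or an identity, so $\Delta$ is continuous in the product topology. By definition of $h$,
\[
\graph h = \{(x,y) \in \bbx \times \bby \; | \; y \in H(x,y)\} = \Delta^{-1}(\graph H).
\]
Since $\graph H$ is closed by hypothesis and $\Delta$ is continuous, $\graph h$ is closed as well. Equivalently, one may verify this directly via nets: given any net $(x_\alpha,y_\alpha)_{\alpha} \to (x,y)$ in $\graph h$, the net $(x_\alpha,y_\alpha,y_\alpha)$ lies in $\graph H$ and converges to $(x,y,y)$, so closedness of $\graph H$ yields $y \in H(x,y)$, i.e., $(x,y) \in \graph h$.

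For part~\eqref{Lemma:FixedPtUC-2}, the plan is to combine part~\eqref{Lemma:FixedPtUC-1} with the closed graph theorem (Theorem~\ref{Thm:ClosedGraphThm}). First, I would extract closedness of fibers: for each fixed $x \in \bbx$, the section $h(x) = \{y \in \bby \; | \; (x,y) \in \graph h\}$ is the preimage of the closed set $\graph h$ under the continuous map $y \mapsto (x,y)$, hence closed in $\bby$. Since $\bby$ is compact Hausdorff, $h(x)$ is therefore closed and compact. Finally, upper continuity of $h$ follows directly from the reverse direction of Theorem~\ref{Thm:ClosedGraphThm}, which asserts that a set-valued mapping into a compact space with closed graph is automatically upper continuous.

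I do not anticipate a serious obstacle here: both parts are essentially bookkeeping once the diagonal embedding is identified. The only subtle point is making sure the implicit Hausdorff assumption on $\bbx$ and $\bby$ is enough for part~\eqref{Lemma:FixedPtUC-1} (it is, since continuity of $\Delta$ only uses the universal property of the product topology), and that in part~\eqref{Lemma:FixedPtUC-2} the range space assumptions of Theorem~\ref{Thm:ClosedGraphThm} are met by $\bby$ being compact Hausdorff (hence regular, in fact normal).
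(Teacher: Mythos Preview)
Your proposal is correct and matches the paper's argument essentially line for line: the paper proves part~\eqref{Lemma:FixedPtUC-1} via the very net argument you spell out (your diagonal-embedding formulation is just a continuous-preimage repackaging of the same step), and for part~\eqref{Lemma:FixedPtUC-2} it likewise invokes Theorem~\ref{Thm:ClosedGraphThm} together with ``closed in compact is compact.'' If anything, your version is slightly more careful in first verifying that $h$ is closed-valued before appealing to Theorem~\ref{Thm:ClosedGraphThm}, since that theorem is stated for maps into $\hat\pcal_f(\bby)$.
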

\begin{proof}
\begin{enumerate}
\item Recall that the graph of $h$ is given by
\[\graph h := \{(x,y) \in \bbx \times \bby \; | \; y \in h(x)\}.\]
Let $\{(x_i,y_i)\}_{i \in I} \subseteq \bbx \times \bby \to (x,y)$ such that $(x_i,y_i) \in \graph h$ for every $i \in I$.  By definition of the mapping $h$ it is immediate that $(x_i,y_i,y_i) \in \graph H$ for every $i \in I$.  By convergence in the product topology and closedness of the graph of $H$ it immediately follows that $y \in H(x,y)$, i.e., $y \in h(x)$.
\item If we additionally assume that $\bby$ is compact then we can apply the closed graph theorem (Theorem~\ref{Thm:ClosedGraphThm}) to recover that $h$ is upper continuous and closed-valued.  Since a closed subset of a compact set is compact, we recover that $h$ is additionally compact-valued.
\end{enumerate}
\end{proof}

\begin{remark}\label{Rem:Nonempty}
If the properties of Lemma~\ref{Lemma:FixedPtUC}\eqref{Lemma:FixedPtUC-2} are satisfied, $\bby$ is a locally convex space that is convex, and $H$ has nonempty convex images then $h$ has nonempty images by the Kakutani fixed point theorem (see, e.g., \cite[Corollary 17.55]{AB07})
\end{remark}

\begin{lemma}\label{Lemma:FixedPtLC}
\begin{enumerate}
\item \label{Lemma:FixedPtLC-1} If $\graph H \subseteq \bbx \times \bby \times \bby$ is open in the product topology then $\graph h \subseteq \bbx \times \bby$ is open in the product topology.
\item \label{Lemma:FixedPtLC-2} If $H$ has open fibers (i.e., $H^{-}(\{\bar y\}) := \{(x,y) \in \bbx \times \bby \; | \; \bar y \in H(x,y)\}$ is open for every $\bar y \in \bby$) then $h$ has open fibers.
\end{enumerate}
\end{lemma}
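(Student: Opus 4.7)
\medskip

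\noindent\textbf{Proof proposal.} The plan is to exploit the simple fact that the fixed point condition $y \in H(x,y)$ can be written as $(x,y,y) \in \graph H$, so that $\graph h$ and the fibers of $h$ are obtained as pullbacks of the corresponding objects for $H$ along the continuous diagonal-type embedding $\iota: \bbx \times \bby \to \bbx \times \bby \times \bby$, $\iota(x,y) := (x,y,y)$. Since preimages of open sets under continuous maps are open, both parts reduce to a definitional check that runs in exact parallel to the proof of Lemma~\ref{Lemma:FixedPtUC}, only with ``closed'' replaced by ``open'' throughout.

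For part~(i), I would fix an arbitrary $(x_0,y_0) \in \graph h$, so that $y_0 \in H(x_0,y_0)$, i.e., $(x_0,y_0,y_0) \in \graph H$. Openness of $\graph H$ in the product topology furnishes basic open sets $U \subseteq \bbx$, $V,W \subseteq \bby$ with $(x_0,y_0,y_0) \in U \times V \times W \subseteq \graph H$. Then $U \times (V \cap W)$ is an open neighborhood of $(x_0,y_0)$ in $\bbx \times \bby$, and for any $(x,y) \in U \times (V \cap W)$ we have $(x,y,y) \in U \times V \times W \subseteq \graph H$, hence $y \in H(x,y)$, i.e., $(x,y) \in \graph h$. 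This shows $\graph h$ is a neighborhood of each of its points, hence open. Equivalently (and more concisely), $\graph h = \iota^{-1}[\graph H]$ with $\iota$ continuous.

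For part~(ii), the fiber of $h$ at $\bar y \in \bby$ is
\[h^{-}(\{\bar y\}) = \{x \in \bbx \; | \; \bar y \in H(x,\bar y)\} = \{x \in \bbx \; | \; (x,\bar y) \in H^{-}(\{\bar y\})\},\]
which is precisely the section at $\bar y$ of the open set $H^{-}(\{\bar y\}) \subseteq \bbx \times \bby$. Equivalently, $h^{-}(\{\bar y\})$ is the preimage of $H^{-}(\{\bar y\})$ under the continuous map $x \mapsto (x,\bar y)$, so it is open in $\bbx$.

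There is no real obstacle here: both claims are essentially tautological once one recognizes the pullback structure, and the argument is the exact ``open'' mirror of Lemma~\ref{Lemma:FixedPtUC}\eqref{Lemma:FixedPtUC-1}. The only point worth care is keeping the roles of the two $\bby$-factors in $\bbx \times \bby \times \bby$ straight (one parameterizes the argument of $H$, the other records the image), which is what makes the diagonal embedding $\iota$ the right conceptual tool.
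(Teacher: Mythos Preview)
Your proof is correct. Both you and the paper recognize that the fixed point condition $y \in H(x,y)$ amounts to membership of $(x,y,y)$ in $\graph H$, so $\graph h$ is the diagonal pullback of $\graph H$; the difference is purely in how the openness is then extracted. You argue directly: $\graph h = \iota^{-1}[\graph H]$ for the continuous embedding $\iota(x,y)=(x,y,y)$, and preimages of open sets under continuous maps are open (with an explicit basic-open-set version spelled out for part~(i), and the section map $x\mapsto(x,\bar y)$ playing the same role for part~(ii)). The paper instead mirrors its proof of Lemma~\ref{Lemma:FixedPtUC} verbatim, working with nets in the \emph{complement}: if a net $(x_i,y_i)\notin\graph h$ converges, then $(x_i,y_i,y_i)\notin\graph H$, and openness of $\graph H$ forces the limit to lie in the complement as well, so the complement of $\graph h$ is closed. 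Your formulation is slightly cleaner and makes the purely topological nature of the statement transparent (no Hausdorffness or nets are actually needed), while the paper's net argument has the virtue of being a literal ``open'' dual of the closed-graph lemma it sits next to.
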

\begin{proof}
\begin{enumerate}
\item Let $\{(x_i,y_i)\}_{i \in I} \subseteq \bbx \times \bby \to (x,y)$ such that $(x_i,y_i) \not\in \graph h$ for every $i \in I$.  By definition of the mapping $h$ it is immediate that $(x_i,y_i,y_i) \not\in \graph H$ for every $i \in I$.  By convergence in the product topology and openness of the graph of $H$ it immediately follows that $y \not\in H(x,y)$, i.e., $y \not\in h(x)$.
\item Fix $\bar y \in \bby$.  Let $\{x_i\}_{i \in I} \subseteq \bbx \to x$ such that $x_i \not\in h^{-}(\{\bar y\})$ for every $i \in I$.  By definition of the mapping $h$ it is immediate that $(x_i,\bar y) \not\in H^{-}(\{\bar y\})$ for every $i \in I$.  By convergence in the product topology and openness of the fibers of $H$ it immediately follows that $(x,\bar y) \not\in H^{-}(\{\bar y\})$, i.e., $\bar y \not\in h(x)$.
\end{enumerate}
\end{proof}

\begin{remark}
The condition of Lemma~\ref{Lemma:FixedPtLC}\eqref{Lemma:FixedPtLC-1} and \eqref{Lemma:FixedPtLC-2} imply $h$ is lower continuous, in fact the condition of Lemma~\ref{Lemma:FixedPtLC}\eqref{Lemma:FixedPtLC-1} implies \eqref{Lemma:FixedPtLC-2} (see, e.g., Lemma~\ref{Lemma:OpenGraphLemma}).
\end{remark}

\bibliographystyle{plain}
\bibliography{biblio}

\end{document}